 \newtheorem{thm}{Theorem}[section]
 \newtheorem{corollary}[thm]{Corollary}
 \newtheorem{lemma}[thm]{Lemma}
 \newtheorem{proposition}[thm]{Proposition}
 \theoremstyle{definition}
 \newtheorem{definition}[thm]{Definition}
 \theoremstyle{remark}
 \newtheorem*{example}{Example}
 \numberwithin{equation}{section}
\newcommand{\C}{\mathbb{C}}
\begin{document}

%
%
%
%
%
%
%
%
%

\title[Internal monoids and groups in ccm-magmas]{Internal monoids and groups in the category of commutative cancellative medial magmas}

\author[J. P. Fatelo]{Jorge Pereira Fatelo}

\address{%
School of Technology and Management\\
Centre for Rapid and Sustainable Product Development-CDRSP\\
Polytechnic Institute of Leiria\\
P-2411-901, Leiria, Portugal}

\email{jorge.fatelo@ipleiria.pt}

\author[N. Martins-Ferreira]{Nelson Martins-Ferreira}

\address{%
School of Technology and Management\\
Centre for Rapid and Sustainable Product Development-CDRSP\\
Polytechnic Institute of Leiria\\
P-2411-901, Leiria, Portugal}

\email{martins.ferreira@ipleiria.pt}

\thanks{ Research supported by
IPLeiria/ESTG-CDRSP and Funda\c c\~ao para a Ci\^encia e a
Tecnologia 
by the FCT projects PTDC/EME-CRO/120585/2010 and
PTDC/MAT/120222/2010. And also by the grant number SFRH/BPD/4321/2008 at CMUC}
\subjclass{Primary 08C15; Secondary 20N02}

\keywords{midpoint algebra,  commutative monoid, abelian group, cancellation law, medial law, quasigroup, internal monoid, internal group, internal relation, weakly Mal'tsev category}

\date{17Jun2014}
\dedicatory{\date}

\begin{abstract}
This article considers the category of commutative medial magmas with cancellation, a structure that generalizes midpoint algebras and commutative semigroups with cancellation. In this category each object admits at most one internal monoid structure for any given unit. Conditions for the existence of internal monoids and internal groups, as well as conditions under which an internal reflexive relation is a congruence,  are studied.
\end{abstract}

\maketitle

\section{Introduction}\label{sec introduction}

A midpoint algebra is a pair $(A,\oplus)$ where $A$ is a  set and $\oplus$ a binary operation satisfying the following axioms:
\[\begin{array}{rl}
 \text{(idempotency)} & x\oplus x= x,\\
 \text{(commutativity)} & x\oplus y= y \oplus x ,\\
 \text{(cancellation)} & (\exists a\in A, x\oplus a= y \oplus a)\Rightarrow x=y,\\
\text{(mediality)}&(x\oplus y)\oplus (z\oplus w)=( x \oplus z)\oplus (y\oplus w) .
\end{array}\]
The main example, which attracted our attention to this study, is the unit interval $A=[0,1]$ with $x\oplus y=\frac{x+y}{2}$. 

 The category of midpoint algebras is not a Mal'tsev category. Indeed, the usual order relation on the unit interval, with the arithmetic mean as above, is clearly a subalgebra of the product $[0,1]\times [0,1]$, which is reflexive and transitive but not symmetric, contradicting the well known characterization of Mal'tsev categories \cite{CKP,Carboni-Lambek-Pedicchio,Carboni-Pedicchio-Pirovano}. Nevertheless, the category of midpoint algebras does have some interesting properties weaker than those of Mal'tsev categories, namely that each object admits at most one internal monoid structure for each choice of a unit. To illustrate this aspect we use another example. Let $(A,\oplus)$ be the midpoint algebra with $A=]0,1]$, the set of positive real numbers smaller or equal than one, and the operation\[a\oplus b=\frac{2ab}{a+b}.\] It is clear that if $a$ and $b$ are positive real numbers then $a\oplus b$ is also positive; it is also easy to see that if $a$ and $b$ are less or equal than one then $a\oplus b$ is less or equal than one. A simple way to see it is to observe that the condition $a\oplus b\leq 1$ is equivalent to the condition\[0\leq a(1-b)+b(1-a).\] Hence the operation is well defined and it is easily checked that $(A,\oplus)$ is a midpoint algebra. 
%
The diagram\[\xymatrix{A\times A\ar[r]^{m}&A&\{1\}\ar[l]},\] with \[m(x,y)=\frac{xy}{x+y-xy},\] is an internal monoid. This internal monoid structure is uniquely determined by the unit element $1\in A$ and there is no internal monoid structure for any other choice of a unit element. Moreover, it is not an internal group as, for example, $\frac{1}{2}$ has no inverse since the equation \[x=1+x\] has no solution in the real numbers. 

%
%
%

While proving some of the results presented in this paper, we have observed that, in many cases, the idempotency law could be dealt with in a separate way. This suggested to us the study of the more general structure of commutative cancellative  medial magmas, which, for simplicity, we will refer to as ccm-magmas. The medial law has been widely studied over the last three quarter of a century. In the context of magmas (formerly known as groupoids) this law  was also referred to as bisymmetric,  entropic or transposition, among other names. It has been studied either from an algebraic point of view or from a more geometrical perspective, see for example \cite{Aczel,Escardo,Jezek,Kannappan,Kermit70,Toyoda}.

 The open-closed unit interval $]0,1]$ used in the example above has several other important structures of ccm-magma: that of a midpoint algebra (with the arithmetic mean), and that of a commutative monoid with cancellation (with the usual multiplication), resulting in an interplay between different ccm-magmas on the same set. The study of such interactions is certainly  worthy. Nevertheless, for the moment, we dedicate our attention to some important aspects of the internal structures in the category of ccm-magmas, such as internal monoid, internal group and internal relation.

This paper is organized as follows: Section \ref{sec preliminaries}  recalls some categorical notations and concepts such as the one of an internal monoid; Section \ref{sec ccm-magmas} introduces the category of ccm-magmas, gives some examples, and some useful lemmas are proven; Section  \ref{sec mal'tsev} observes that the category of ccm-magmas is weakly Mal'tsev and characterizes the existence of an internal monoid structure on a given object, for every choice of a unit element; Section \ref{sec monoids} explores this property further by considering the notions of $e$-expansive, $e$-symmetric and homogeneous ccm-magmas (a short list of simple examples and counter-examples is also presented at the end of the section); and finally, Section \ref{sec relations} studies some properties of internal relations, namely symmetry, transitivity, reflexivity and difunctionality, which are not visible in the case of Mal'tsev categories.

\section{Preliminaries}\label{sec preliminaries}

The basic notions and notations from Category Theory used in this paper can be found, for instance, in \cite{MacLane}. Let us just recall a few of them. If $\C$ is any category with finite limits, an internal monoid in $\C$ is a diagram of the shape\[\xymatrix{A\times A\ar[r]^{m}&A&1\ar[l]_{e}}\] in which $A$ is any object in $\C$, $m$ and $e$ are morphisms in $\C$, $1$ denotes the terminal object and  the following diagram is commutative (where $!_A$ denotes the unique morphism from $A$ to the terminal object $1$).\[\xymatrix{A^3\ar[r]^{1_A\times m}\ar[d]_{m\times 1_A}&A^2\ar[d]_{m}&A\ar@{=}[ld]\ar[l]_{\langle e!_A,1_A\rangle}\ar[d]^{\langle 1_A,e!_A\rangle}\\A^2\ar[r]^{m}&A&A^2\ar[l]_{m}}\] 
An internal monoid $(A,m,e)$ is an internal group (see for example the Appendix in \cite{BB}) if and only if the diagram \[\xymatrix{A&A\times A\ar[r]^{\pi_2}\ar[l]_{m}& A}\] is a product diagram (the morphism $\pi_2$ is the canonical second projection), in other words, for every two morphisms $u,v\colon{X\to A}$ there exists a unique morphism, represented as $u-v$, from $X$ to $A$ such that $$m\langle u-v,v \rangle=u.$$ The inverse of a generalized element $x\colon{X\to A}$ is clearly $e-x$ with $e\colon{1\to A}$ the unit element. This is simply another way to say that there is a morphism $t\colon{A\to A}$ such that \[m\langle 1_A,t\rangle=e!_A=m\langle t,1_A\rangle.\]

This paper restricts itself to quasi-varieties of universal algebra \cite{Sankappanavar}, that is, categories in which the objects are sets equipped with an arbitrary family of finite-arity operations, satisfying a collection of axioms which may be either expressed  as identities or as implications. All the results to be proven about conditions of uniqueness are easily generalized to a category with a faithful functor into the category of sets, preserving finite limits. However, the results involving existence conditions depend on the context and do not necessarily hold in general.

The notion of internal ccm-magma is also a natural one to be considered. It would be interesting, for example, to study topological ccm-magmas, i.e. ccm-magmas internal to the category of topological spaces  \cite{Escardo,Kermit68}. 

A congruence on an object $A$ is a subalgebra of $A\times A$, which, if considered as a relation, is reflexive, transitive and symmetric (see e.g. \cite{BB}).  We will also consider difunctional relations: a relation $R\subseteq X\times Y$ is difunctional if the implication \[xRy,zRy,zRw\Rightarrow xRw\]holds for all $x,z\in X$ and  $y,w\in Y$. 

A Mal'tsev category  is characterized by the property that every reflexive internal relation is a congruence relation, or equivalently, by the property that every internal relation is difunctional (\cite{CKP,Carboni-Lambek-Pedicchio,Carboni-Pedicchio-Pirovano} see also \cite{BB}). As we will see, this property does not hold in the categories we are considering. However, these have a weaker property: some types of  reflexive internal relations are still congruences, and some types of internal relations are automatically difunctional.

\section{Ccm-magmas}\label{sec ccm-magmas}

A ccm-magma (commutative cancellative medial magma) may be obtained from a midpoint algebra  simply by not requiring the idempotency axiom.

\begin{definition}\label{middle algebra}
A \emph{ccm-magma} is an algebraic structure $(A,\oplus)$ with a binary operation $\oplus$ satisfying the following axioms:
\begin{enumerate}
\item[M1] $a\oplus b=b\oplus a$
\item[M2] $a\oplus c=b\oplus c\implies a=b$
\item[M3] $(a\oplus b)\oplus (c\oplus d)=(a\oplus c)\oplus (b\oplus d)$
\end{enumerate}
\end{definition}

A morphism of ccm-magmas is simply a homomorphism, that is, a map preserving the binary operation.


It is known that the category of commutative cancellative magmas is weakly Mal'tsev (\cite{NMF2012}). The third axiom has an important consequence illustrated in the following proposition.

\begin{proposition}\label{prop1} If $f_1,f_2\colon{A\to B}$ are two morphisms of ccm-magmas, then the map $f\colon{A\times A\to B}$ defined by \[f(x,y)=f_1(x)\oplus f_2(y)\] is also a morphism of ccm-magmas.
\end{proposition}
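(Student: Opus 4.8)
The plan is to verify directly that $f(x,y) = f_1(x) \oplus f_2(y)$ preserves the binary operation on $A \times A$, which is computed componentwise as $(x_1,y_1) \oplus (x_2,y_2) = (x_1 \oplus x_2,\, y_1 \oplus y_2)$. So I need to check that
\[
f\bigl((x_1,y_1)\oplus(x_2,y_2)\bigr) = f(x_1,y_1)\oplus f(x_2,y_2),
\]
that is, $f_1(x_1 \oplus x_2) \oplus f_2(y_1 \oplus y_2) = \bigl(f_1(x_1) \oplus f_2(y_1)\bigr) \oplus \bigl(f_1(x_2) \oplus f_2(y_2)\bigr)$.

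First I would use that $f_1$ and $f_2$ are themselves homomorphisms to rewrite the left-hand side as $\bigl(f_1(x_1) \oplus f_1(x_2)\bigr) \oplus \bigl(f_2(y_1) \oplus f_2(y_2)\bigr)$. Then the identity to be proved becomes an instance of the medial law M3 applied in $B$ with $a = f_1(x_1)$, $b = f_1(x_2)$, $c = f_2(y_1)$, $d = f_2(y_2)$: M3 gives $(a \oplus b) \oplus (c \oplus d) = (a \oplus c) \oplus (b \oplus d)$, which is exactly the equality between the two sides. So the whole argument reduces to: apply the homomorphism property of $f_1$ and $f_2$, then apply M3 once. Commutativity M1 is not even needed, and cancellation M2 plays no role here.

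There is essentially no obstacle: the statement is a one-line consequence of M3 together with the fact that the operation on a product is componentwise. The only thing to be careful about is bookkeeping with the variables — matching up which arguments play the roles of $a,b,c,d$ in M3 — and noting at the outset that the product $A \times A$ does carry a ccm-magma structure (which it does, since ccm-magmas form a quasi-variety and hence have products computed componentwise), so that the claim "$f$ is a morphism" is meaningful. I would state this product structure explicitly and then give the two-step computation above.
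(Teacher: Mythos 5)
Your proposal is correct and follows exactly the route of the paper, whose proof simply states that the result is an immediate consequence of axiom (M3); your write-up just spells out the componentwise product structure, the homomorphism property of $f_1$ and $f_2$, and the single application of M3. Nothing is missing.
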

\begin{proof}
The result is an immediate consequence of the axiom (M3).
\end{proof}

The result of Proposition \ref{prop1} stands even if commutativity is not included (see also \cite{Stein}, Theorem 12.8); in this case, among all medial-like identities studied in \cite{Polonijo}, (M3) is the most appropriate in proving this result. In fact, as we will see in Section \ref{sec mal'tsev}, even the weakly Mal'tsev property may be proven without commutativity. This paper considers the commutativity axiom. In a future work we will investigate a more general structure where (M1) is not included.

A simple variation of the previous result shows that for any two ccm-magmas $X$ and $Y$, there is a canonical morphism from product into co-product \[X\times Y\to X+Y\] sending each pair $(x,y)\in X\times Y$ to the element $\iota_X(x)\oplus \iota_Y(y)\in X+Y$, with $\iota_X\colon{X\to X+Y}$ and $\iota_Y\colon{Y\to X+Y}$ the canonical inclusions into the co-product. More generally, for any two given morphisms $f_1\colon{X\to B}$ and $f_2\colon{Y\to B}$ there is an induced morphism, say $f=f_1\oplus f_2\colon{X\times Y\to B}$, even thought there are no canonical injections to insert $X$ and $Y$ in $X\times Y$.

Next we present a short list of examples build up from well-known algebraic structures.

\begin{example}\label{ex1} A short list of simple examples:
\begin{enumerate}
\item The open unit interval $]0,1[$ with the operation \[x\oplus y=\frac{x+y}{2}\quad\text{or}\quad x\oplus y=\sqrt{xy}\] is a ccm-magma.
\item Every midpoint algebra is a ccm-magma.
\item The set of natural numbers with the usual addition is a ccm-magma. 
\item Every commutative semigroup with cancellation  is a ccm-magma.
\item If $A$ is a field, then the formula\[x\oplus y=a(x+y)+b\] gives a ccm-magma for every choice of $a,b\in A$ with $a\neq 0$.
\item If $A$ is any ring, then the formula\[x\oplus y=a(x+y)+b\] gives a ccm-magma for every choice of $a,b\in A$, with $a$ an invertible element.
\item If $V$ is any real or complex vector space, then the formula \[x\oplus y=\alpha(x+y)+b\]gives a ccm-magma on $V$ for every choice of a scalar $\alpha\neq 0$ and for every $b\in A$.
\item If $R$ is a ring and $A$ an $R$-module, then the formula \[x\oplus y=\alpha(x+y)+b\]gives a ccm-magma on $A$ for every choice of an invertible scalar $\alpha\in R$ and any $b\in A$.
\item If $A$ is  the set of positive real numbers, then the formula\[x\oplus y=\frac{axy}{x+y}\]gives a ccm-magma for every choice of $a\in A$.
\item If $(A,+,\times,\cdot,0,1)$ is a commutative and associative classical algebra over a commutative ring $R$, then the formula \[x\oplus y=\alpha\cdot(x+y)+\beta\cdot(x\times y)\] gives a ccm-magma on the set $$\{x\in A\mid \alpha\cdot 1+\beta\cdot x\, \text{is invertible}\}$$ for every choice of scalars $\alpha,\beta\in R$ with $\alpha=\alpha^2$.
\item Every loop with $a(bc)=c(ba)$ is a ccm-magma, see for instance \cite{Stein}.  
\item If $(A,\oplus)$ is a ccm-magma and $g\colon{A\to A}$ is a monomorphism, then the formula \[(x,y)\mapsto g(x\oplus y)\oplus a\] gives a ccm-magma on $A$, for every choice of $a\in A$.
\end{enumerate}
\end{example}

The last example is obtained directly from the proposition below by letting $f=k=1_A$ and $h=g$.

\begin{proposition}\label{prop2} Let $(A,\oplus)$ be a ccm-magma and $f,g\colon{A\to B}$ any two maps. If $f$ and $g$ are injective and there are maps $h,k\colon{A\to B}$ such that \[ f(g(x\oplus y)\oplus z)=(h(x)\oplus h(y))\oplus k(z), \quad x,y,z\in A,\] then the formula \[(x,y)\mapsto g(f(x)\oplus f(y))\oplus a\] gives a ccm-magma for every choice of $a\in A$.
\end{proposition}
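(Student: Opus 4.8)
The plan is to introduce the candidate operation $x\star y:=g(f(x)\oplus f(y))\oplus a$ on $A$ and to verify the three ccm-magma axioms (M1)--(M3) for $\star$ directly, noting which hypotheses each one consumes. It is worth observing in advance that (M1) and (M2) will use only commutativity and cancellation of $\oplus$ together with the injectivity of $f$ and $g$, while the displayed identity linking $f,g,h,k$ enters \emph{only} through the medial law (M3).

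For (M1) one has $x\star y=g(f(x)\oplus f(y))\oplus a=g(f(y)\oplus f(x))\oplus a=y\star x$ by commutativity of $\oplus$. For (M2), suppose $x\star c=y\star c$; cancelling $a$ on the right (by (M2) for $\oplus$) gives $g(f(x)\oplus f(c))=g(f(y)\oplus f(c))$, injectivity of $g$ gives $f(x)\oplus f(c)=f(y)\oplus f(c)$, cancelling $f(c)$ gives $f(x)=f(y)$, and injectivity of $f$ gives $x=y$.

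The real content is (M3). The point is that the displayed hypothesis is exactly what is needed to ``linearize'' an iterated $\star$-product: applying it with $f(x),f(y)$ in place of the first two variables and $a$ in place of the third gives
\[
f(x\star y)=f\bigl(g(f(x)\oplus f(y))\oplus a\bigr)=\bigl(h(f(x))\oplus h(f(y))\bigr)\oplus k(a),
\]
and similarly for $f(z\star w)$, $f(x\star z)$ and $f(y\star w)$. Abbreviating $P=h(f(x))$, $Q=h(f(y))$, $R=h(f(z))$, $S=h(f(w))$ and $\kappa=k(a)$, we then get
\[
(x\star y)\star(z\star w)=g\Bigl(\bigl((P\oplus Q)\oplus\kappa\bigr)\oplus\bigl((R\oplus S)\oplus\kappa\bigr)\Bigr)\oplus a
\]
and
\[
(x\star z)\star(y\star w)=g\Bigl(\bigl((P\oplus R)\oplus\kappa\bigr)\oplus\bigl((Q\oplus S)\oplus\kappa\bigr)\Bigr)\oplus a,
\]
so it suffices to prove that the two arguments of $g$ coincide in $(A,\oplus)$. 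That is a formal consequence of (M3) applied twice: regroup each of them as $\bigl((P\oplus Q)\oplus(R\oplus S)\bigr)\oplus(\kappa\oplus\kappa)$, resp.\ $\bigl((P\oplus R)\oplus(Q\oplus S)\bigr)\oplus(\kappa\oplus\kappa)$, and then apply $(P\oplus Q)\oplus(R\oplus S)=(P\oplus R)\oplus(Q\oplus S)$.

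I expect no genuine difficulty: (M1) and (M2) are immediate, and the only care needed is the bookkeeping in (M3) --- recognising that the hypothesis is tailored so that $f$ converts the nested $\star$-product into an expression controlled by the medial law of $\oplus$ after the routine ``square'' regrouping $(p\oplus r)\oplus(q\oplus r)=(p\oplus q)\oplus(r\oplus r)$ --- together with a routine check that every composite written above is well defined, which holds in the intended setting.
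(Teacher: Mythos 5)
Your proof is correct and follows essentially the same route as the paper's: commutativity is immediate, cancellation comes from injectivity of $f$ and $g$ together with (M2) for $\oplus$, and the medial law is obtained by using the hypothesis identity (with $f(x),f(y)$ and $a$ substituted) to rewrite $f$ of each $\star$-product and then comparing the two resulting expressions via (M3). The only difference is that you spell out explicitly the ``simple manipulation of axiom (M3)'' that the paper leaves to the reader, which is a harmless elaboration.
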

\begin{proof}
The result is an immediate consequence of the axioms, combined with the hypotheses on the given maps. Commutativity is immediate. Cancellation follows from $f$ and $g$ being injective. To prove the medial law,  on the one hand we have \[f(g(fx\oplus fy)\oplus a)\oplus f(g(fz\oplus fw)\oplus a)\] which by our assumptions on $f$ and $g$ simplifies to \begin{equation}\label{eq1prop2}((h(fx)\oplus h(fy))\oplus k(a))\oplus ((h(fz)\oplus h(fw))\oplus k(a));\end{equation} while on the other hand we have \begin{equation*}
f(g(fx\oplus fz)\oplus a)\oplus f(g(fy\oplus fw)\oplus a),
\end{equation*} which by our assumptions on $f$ and $g$ simplifies to \begin{equation}\label{eq2prop2}
((h(fx)\oplus h(fz))\oplus k(a))\oplus ((h(fy)\oplus h(fw))\oplus k(a)).
\end{equation} Finally we observe that expressions $(\ref{eq1prop2})$ and $(\ref{eq2prop2})$ are equal by a simple manipulation of axiom (M3) on the original $\oplus$, which completes the proof.
\end{proof}

We end this section with two lemmas which will be used later on while proving that some internal relations, in the category of ccm-magmas, are automatically difunctional. This contrasts with the case of Mal'tsev categories in which every internal relation is difunctional, and every internal reflexive relation is always a congruence. 

\begin{lemma}\label{lemma1} Let $f,g\colon{A\times A\to B}$ be two morphisms in the category of ccm-magmas such that $f(a,a)=g(a,a)$ for every $a\in A$. Then:
\begin{enumerate}
\item[(i)] $f(a,b)=g(a,b),f(b,c)=g(b,c)\Rightarrow f(a,c)=g(a,c)$;
\item[(ii)] $f(a,b)=g(a,b)\Rightarrow f(b,a)=g(b,a)$.
\end{enumerate}
\end{lemma}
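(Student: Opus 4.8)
The plan is to exploit a single structural fact: since $f$ and $g$ are magma homomorphisms out of the product $A\times A$, they satisfy $f(x,y)\oplus f(z,w)=f(x\oplus z,\,y\oplus w)$ and likewise for $g$. Combining this with commutativity (M1) in $A$ and cancellation (M2) in $B$ will give both statements; the medial law plays no direct role beyond ensuring that $A\times A$ is again a ccm-magma.

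For part (i), I would first record the identity
$$f(a,c)\oplus f(b,b)=f(a\oplus b,\,c\oplus b)=f(a\oplus b,\,b\oplus c)=f(a,b)\oplus f(b,c),$$
which follows from $f$ being a homomorphism together with commutativity in the second coordinate of $A$; the same identity holds with $g$ in place of $f$. Feeding in the hypotheses $f(a,b)=g(a,b)$ and $f(b,c)=g(b,c)$ makes the two right-hand sides coincide, so $f(a,c)\oplus f(b,b)=g(a,c)\oplus g(b,b)$. Since $f(b,b)=g(b,b)$ by assumption, cancellation (M2) yields $f(a,c)=g(a,c)$.

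For part (ii), the analogous computation is
$$f(a,b)\oplus f(b,a)=f(a\oplus b,\,b\oplus a)=f(a\oplus b,\,a\oplus b),$$
again using the homomorphism property and commutativity, and the same for $g$. Because $f$ and $g$ agree on the diagonal, the right-hand sides are equal, hence $f(a,b)\oplus f(b,a)=g(a,b)\oplus g(b,a)$; since $f(a,b)=g(a,b)$, one further cancellation (after a commutativity swap) gives $f(b,a)=g(b,a)$.

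I do not expect a genuine obstacle here. The only point requiring care is the bookkeeping of which coordinate absorbs the commutativity swap, so that the relevant "off-diagonal" expression collapses to a diagonal value on which $f$ and $g$ are already known to coincide; once that reduction is in place, (M2) closes each case at once. Conceptually the lemma just says that two morphisms $A\times A\to B$ agreeing on the diagonal automatically inherit the transitivity-type closure (i) and the symmetry-type closure (ii).
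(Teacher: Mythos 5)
Your proposal is correct and follows essentially the same route as the paper: use the homomorphism property of $f,g$ on the product, a commutativity swap in one coordinate of $A$ to reduce to expressions involving diagonal values, and then cancellation (M2) in $B$. The only difference is cosmetic (the order of the summands in part (ii)), so no further comment is needed.
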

\begin{proof}
(i) We will show that if $f(a,b)=g(a,b)$ and $f(b,c)=g(b,c)$  then  we always have \begin{equation}\label{eq1lemma1}
f(a,c)\oplus f(b,b)=g(a,c)\oplus g(b,b)
\end{equation} and since $f(b,b)=g(b,b)$ we use (M2) and get $f(a,c)=g(a,c)$. To show (\ref{eq1lemma1}) we observe:
\begin{eqnarray*}
f(a,c)\oplus f(b,b)&=&f(a\oplus b,c\oplus b)\\
&=&f(a\oplus b,b\oplus c)\\
&=&f(a, b)\oplus f(b,c)\\
&=&g(a, b)\oplus g(b,c)\\
&=&g(a\oplus b,b\oplus c)\\
&=&g(a\oplus b,c\oplus b)\\
&=&g(a,c)\oplus g(b,b).
\end{eqnarray*}

(ii) Since $f(a,a)=g(a,a)$ for every $a\in A$, we always have:
\begin{eqnarray*}
f(b,a)\oplus f(a,b)&=&f(b\oplus a,a\oplus b)\\
&=&f(a\oplus b,a\oplus b)\\
&=&g(a\oplus b,a\oplus b)\\
&=&g(b\oplus a,a\oplus b)\\
&=&g(b,a)\oplus g(a,b)
\end{eqnarray*}
now, if $f(a,b)=g(a,b)$ then, using (M2) we obtain $f(b,a)=g(b,a)$ as desired.
\end{proof}

\begin{lemma}\label{lemma2} Let $f,g\colon{X\times Y\to B}$ be any two morphisms in the category of ccm-magmas. Then:
\[\left(\begin{matrix}f(x,y)=g(x,y)\\f(z,y)=g(z,y)\\f(z,w)=g(z,w)\end{matrix}\right) \Rightarrow f(x,w)=g(x,w).\]
\end{lemma}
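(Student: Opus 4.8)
The plan is to reuse the cancellation strategy of Lemma~\ref{lemma1}: instead of proving $f(x,w)=g(x,w)$ directly, I would produce a single element $c\in B$ with $f(x,w)\oplus c=g(x,w)\oplus c$ and then invoke (M2). The natural candidate is $c=f(z,y)=g(z,y)$, the ``mixed'' hypothesis linking the first coordinate of one pair with the second coordinate of another.

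First I would expand $f(x,w)\oplus f(z,y)$ using that $f$ is a homomorphism and that the operation on $X\times Y$ is componentwise, together with commutativity (M1) in the factor $Y$:
\[f(x,w)\oplus f(z,y)=f(x\oplus z,\,w\oplus y)=f(x\oplus z,\,y\oplus w)=f(x,y)\oplus f(z,w),\]
and in exactly the same way
\[g(x,w)\oplus g(z,y)=g(x,y)\oplus g(z,w).\]
Note that this step only uses that $f$ and $g$ preserve $\oplus$ and the commutativity axiom; the medial law (M3) is not needed separately here, since it is already built into $f$ and $g$ being morphisms defined on a product.

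Next, the hypotheses $f(x,y)=g(x,y)$ and $f(z,w)=g(z,w)$ make the right-hand sides of the two displays above equal, so
\[f(x,w)\oplus f(z,y)=g(x,w)\oplus g(z,y).\]
Finally, the remaining hypothesis $f(z,y)=g(z,y)$ rewrites this as $f(x,w)\oplus c=g(x,w)\oplus c$ with $c=f(z,y)$, and (M2) gives $f(x,w)=g(x,w)$, as required.

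I do not expect any real obstacle. The only decision is which of the three hypothesised equalities to save for the concluding cancellation, and the mixed one $f(z,y)=g(z,y)$ is essentially forced, because $f(z,y)$ is precisely the term produced when $f(x,w)\oplus f(z,y)$ is re-bracketed via the homomorphism property.
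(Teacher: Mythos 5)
Your proposal is correct and is essentially the paper's own proof: the identical chain $f(x,w)\oplus f(z,y)=f(x\oplus z,y\oplus w)=f(x,y)\oplus f(z,w)$ (and likewise for $g$), followed by the hypotheses and cancellation (M2) with $c=f(z,y)=g(z,y)$. Your side remark that only the homomorphism property and (M1) are used, not (M3) explicitly, also matches what the paper actually does.
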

\begin{proof}
Assuming $f(x,y)=g(x,y)$ and $f(z,w)=g(z,w)$ we have:
\begin{eqnarray*}
f(x,w)\oplus f(z,y)&=&f(x\oplus z,w\oplus y)\\
&=&f(x\oplus z,y\oplus w)\\
&=&f(x, y)\oplus f(z,w)\\
&=&g(x, y)\oplus g(z,w)\\
&=&g(x\oplus z,y\oplus w)\\
&=&g(x\oplus z,w\oplus y)\\
&=&g(x,w)\oplus g(z,y).
\end{eqnarray*}
Now, if $f(z,y)=g(z,y)$ then we conclude that $f(x,w)=g(x,w)$ as desired, using axiom (M2).
\end{proof}

\section{Mal'tsev and weakly Mal'tsev properties}\label{sec mal'tsev} 

We have already observed that the category of midpoint algebras, and hence the one of ccm-magmas, is not a Mal'tsev category. It is  a weakly Mal'tsev category as a result of ccm-magmas being a subcategory of commutative cancellative magmas, known to be weakly Mal'tsev. Nonetheless, we present an alternative proof using the medial law instead of commutativity.




\begin{proposition}The category of ccm-magmas is a weakly Mal'tsev category.
\end{proposition}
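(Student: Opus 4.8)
The plan is to use the standard characterisation of a weakly Mal'tsev category: a category with pullbacks of split epimorphisms along split epimorphisms is weakly Mal'tsev precisely when, for every pair of split epimorphisms $f\colon A\to B$, $g\colon C\to B$ with chosen sections $s$ and $t$ (so that $fs=1_B=gt$), the two canonical morphisms $e_1=\langle 1_A, tf\rangle\colon A\to A\times_B C$ and $e_2=\langle sg,1_C\rangle\colon C\to A\times_B C$ into the pullback are jointly epimorphic. Being a quasi-variety, the category of ccm-magmas has all finite limits, and in particular the pullback $A\times_B C=\{(a,c)\in A\times C\mid f(a)=g(c)\}$ exists, computed as in $\mathbf{Set}$ and equipped with the pointwise operation. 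So the whole problem reduces to checking that the pair $(e_1,e_2)$ is jointly epimorphic.

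To this end I would take two morphisms of ccm-magmas $h,k\colon A\times_B C\to D$ with $he_1=ke_1$ and $he_2=ke_2$, and show $h=k$. First note that, since $h$ and $k$ are homomorphisms, the set on which they agree is a subalgebra of $A\times_B C$; as it contains $\mathrm{im}(e_1)\cup\mathrm{im}(e_2)$, it contains the subalgebra these generate, so $h$ and $k$ coincide on every element built with $\oplus$ from elements of $\mathrm{im}(e_1)$ and $\mathrm{im}(e_2)$. Now fix $(a,c)\in A\times_B C$ and put $b=f(a)=g(c)$. The three elements $e_1(a)=(a,tb)$, $e_2(c)=(sb,c)$ and $e_1(sb)=e_2(tb)=(sb,tb)$ lie in the images of $e_1$, $e_2$, so $h$ and $k$ agree on them. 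The decisive step is the identity
\[(a,c)\oplus(sb,tb)=(a,tb)\oplus(sb,c)\]
in $A\times_B C$, both sides being $(a\oplus sb,\,c\oplus tb)$ once the two second coordinates are reconciled. Applying $h$ and then $k$ to this identity and using that $h,k$ coincide on $e_1(a)$, $e_2(c)$ and $(sb,tb)$ yields $h(a,c)\oplus h(sb,tb)=k(a,c)\oplus h(sb,tb)$, whence $h(a,c)=k(a,c)$ by the cancellation axiom (M2). Since $(a,c)$ was arbitrary, $h=k$, so $(e_1,e_2)$ is jointly epimorphic and the category is weakly Mal'tsev.

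The step that needs care — and the one place where the argument must be arranged so as to rest on the medial law (M3) rather than on commutativity (M1), as promised in the preceding discussion — is exactly the displayed identity: with (M1) available the second coordinates $c\oplus tb$ and $tb\oplus c$ coincide and there is nothing left to do, but without it one has to produce, from (M3) alone, a relation in $A\times_B C$ in which $(a,c)$ occurs exactly once and in a position from which it can afterwards be cancelled in $D$, all the remaining factors lying in the subalgebra generated by $\mathrm{im}(e_1)\cup\mathrm{im}(e_2)$. I would look for such a relation by forming a suitable ``doubled'' combination of $(a,c)$ with copies of $(sb,tb)$, $e_1(a)$ and $e_2(c)$ and then exchanging the two inner factors via (M3); everything else (the reduction to joint epimorphy, the passage to the generated subalgebra, and the final cancellation) is purely formal and is the same in both versions. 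I expect this reformulation of the exchange step to be the only genuine obstacle.
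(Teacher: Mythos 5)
Your argument is correct, but it takes a genuinely different route from the paper's. You unwind the definition of a weakly Mal'tsev category and verify joint epimorphy of $(e_1,e_2)$ directly: the pullback is computed as in $\mathbf{Set}$ (the category being a quasi-variety), and the key identity $(a,c)\oplus(sb,tb)=(a,tb)\oplus(sb,c)$, all of whose ``known'' factors lie in $\mathrm{im}(e_1)\cup\mathrm{im}(e_2)$, combined with cancellation (M2) in $D$, gives $h(a,c)=k(a,c)$. This is complete as it stands: commutativity (M1) is an axiom of ccm-magmas, so invoking it is perfectly legitimate for the proposition as stated, and your closing worry about rearranging the exchange step to avoid (M1) concerns only the stronger commutativity-free claim mentioned in the surrounding discussion, not the statement being proved. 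The paper instead simply quotes the criterion of \cite{NMF2012}: the existence of a ternary term $p$ with $p(x,y,y)=p(y,y,x)$ and $p(x,a,a)=p(y,a,a)\Rightarrow x=y$ implies weak Mal'tsevness, and it takes $p(x,y,z)=(y\oplus x)\oplus(z\oplus y)$, for which the identity $p(x,y,y)=p(y,y,x)$ is a single instance of (M3). What the citation buys is brevity and the (M1)-free refinement; what your computation buys is self-containedness, at the price of using (M1). Incidentally, the ``doubled combination'' you were looking for is exactly the paper's term: by (M3) alone one has, in the pullback, $\bigl((sb,tb)\oplus(a,c)\bigr)\oplus\bigl((sb,tb)\oplus(sb,tb)\bigr)=\bigl((sb,tb)\oplus(a,tb)\bigr)\oplus\bigl((sb,c)\oplus(sb,tb)\bigr)$, with every factor on the right-hand side in $\mathrm{im}(e_1)\cup\mathrm{im}(e_2)$; this replaces your displayed identity in the (M1)-free setting (the final step then needs two-sided cancellation, which in the commutative case reduces to (M1) plus (M2), matching the second condition of the cited criterion).
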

\begin{proof}
From \cite{NMF2012} we know that if there exits a ternary term $p(x,y,z)$ such that \[p(x,y,y)=p(y,y,x)\] and \[p(x,a,a)=p(y,a,a)\Rightarrow x=y\] then the category is weakly Mal'tsev. Indeed this is the case for every ccm-magma $(A,\oplus)$ if we define \[p(x,y,z)=(y\oplus x)\oplus(z\oplus y).\qedhere\]
\end{proof}

In a weakly Mal'tsev category any diagram  of the shape
\begin{equation}\label{couniv}
\vcenter{\xymatrix@!0@=4em{A \ar@<.5ex>[r]^-{f} \ar[rd]_-{u} & B
\ar@<.5ex>[l]^-{r}
\ar@<-.5ex>[r]_-{s}
\ar[d]^-{v} & C \ar@<-.5ex>[l]_-{g} \ar[ld]^-{w}\\
& D}}
\end{equation}
with $fr=1_{B}=gs$ and $u r=v=w s$, induces a bigger diagram
\begin{equation}\label{kite}
\vcenter{\xymatrix@!0@=3em{ & C \ar@<.5ex>[ld]^-{e_2} \ar@<-.5ex>[rd]_-{g}
\ar@/^/[rrrd]^-{w} \\
A\times_{B}C \ar@<.5ex>[ru]^-{\pi_2} 
 \ar@<-.5ex>[rd]_-{\pi_1} && B \ar@<.5ex>[ld]^-{r} \ar@<-.5ex>[lu]_-{s}
 \ar[rr]|-{v} && D\\
& A \ar@<.5ex>[ru]^-{f} \ar@<-.5ex>[lu]_-{e_1} \ar@/_/[urrr]_-{u}}}\end{equation}
in which there is at most one morphism, say $\theta\colon{A\times_B C\to D}$, from the pullback $(A\times_B C,\pi_1,\pi_2)$ of the split epimorphism $f$ along the split epimorphism $g$ to the object $D$ such that $\theta e_1=u$ and $\theta e_2=w$. Where $e_1$ and $e_2$ are the induced morphisms of the form\[e_1(a)=(a,sf(a)),\quad e_2(c)=(rg(c),c).\]

The following result states under which conditions a diagram such as (\ref{couniv}) induces a morphism such as $\theta$.

\begin{proposition} Given a diagram such as $(\ref{kite})$ in the category of ccm-magmas, there is a (unique) morphism $\theta\colon{A\times_B C\to D}$, such that $\theta e_1=u$ and $\theta e_2=w$, if and only if,  the equation \begin{equation}\label{equation xvuw}x\oplus v(b)=u(a)\oplus w(c)\end{equation} has a solution $x\in D$  for every $a\in A$, $b\in B$ and $c\in C$ with $f(a)=b=g(c)$. When that is the case, $\theta(a,c)=x$. 
\end{proposition}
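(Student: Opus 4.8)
The plan is to work concretely, using the fact that the category of ccm-magmas is a quasi-variety with a faithful, finite-limit-preserving forgetful functor to $\mathbf{Set}$, so that the pullback $A\times_B C$ is the usual set $\{(a,c)\in A\times C\mid f(a)=g(c)\}$ with the componentwise operation, and $e_1,e_2$ are as displayed. The necessity of the stated condition is the easy direction: if $\theta$ exists with $\theta e_1=u$ and $\theta e_2=w$, then for $a\in A$, $c\in C$ with $f(a)=b=g(c)$ we write $(a,c)$ as a combination of points in the images of $e_1$ and $e_2$. Specifically, one checks that $(a,c)\oplus (rg(c),sf(a)) = (a\oplus rg(c),\, sf(a)\oplus c) = e_1(a)\oplus e_2(c)$ in $A\times_B C$ — here I use $f(rg(c))=g(c)=b$ and $g(sf(a))=f(a)=b$ so all terms lie in the pullback — and since $\theta$ is a homomorphism and $r,s$ are splittings, applying $\theta$ and using $\theta e_1=u$, $\theta e_2=w$, $v=ur=ws$ yields $\theta(a,c)\oplus v(b) = u(a)\oplus w(c)$. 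Thus $x=\theta(a,c)$ solves (\ref{equation xvuw}), and it is unique because $D$ is cancellative (M2).

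For sufficiency, suppose (\ref{equation xvuw}) has a solution for every admissible triple $(a,b,c)$; by cancellation (M2) that solution is unique, so we get a well-defined \emph{function} $\theta\colon A\times_B C\to D$ by setting $\theta(a,c)$ equal to the unique $x$ with $x\oplus v(f(a)) = u(a)\oplus w(c)$. It remains to verify three things: that $\theta$ is a ccm-magma homomorphism, that $\theta e_1=u$, and that $\theta e_2=w$; uniqueness of $\theta$ is then automatic from the weakly Mal'tsev property already recorded in diagram (\ref{kite}), or directly from cancellation. The two boundary conditions are quick: for $\theta e_1 = u$, take $c=sf(a)$, so $b=f(a)=g(c)$ and $w(c)=wsf(a)=vf(a)=v(b)$; the defining equation becomes $\theta(a,sf(a))\oplus v(b) = u(a)\oplus v(b)$, whence $\theta e_1(a)=u(a)$ by (M2). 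Symmetrically, taking $a=rg(c)$ gives $\theta e_2(c)=w(c)$.

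The main obstacle is showing $\theta$ is a homomorphism, i.e. $\theta\bigl((a,c)\oplus(a',c')\bigr) = \theta(a,c)\oplus\theta(a',c')$ for admissible pairs $(a,c),(a',c')$. The idea is to name $x=\theta(a,c)$, $x'=\theta(a',c')$, so that $x\oplus v(b) = u(a)\oplus w(c)$ and $x'\oplus v(b') = u(a')\oplus w(c')$ where $b=f(a)$, $b'=f(a')$. Adding these two equations and regrouping by means of the medial law (M3) — applied on the $D$ side — one transforms the left-hand side into $(x\oplus x')\oplus v(b\oplus b')$, using that $v$ is a homomorphism, and the right-hand side into $u(a\oplus a')\oplus w(c\oplus c')$, using that $u$ and $w$ are homomorphisms. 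Since $(a,c)\oplus(a',c') = (a\oplus a', c\oplus c')$ is the admissible pair over $b\oplus b' = f(a\oplus a')$, the resulting identity $(x\oplus x')\oplus v(b\oplus b') = u(a\oplus a')\oplus w(c\oplus c')$ says precisely that $x\oplus x'$ is \emph{a} solution of (\ref{equation xvuw}) for the triple $(a\oplus a', b\oplus b', c\oplus c')$; by uniqueness of solutions it equals $\theta(a\oplus a', c\oplus c')$, which is what we wanted. The only delicate point is carefully choosing the bracketing so that (M3) on $D$ collapses exactly $x\oplus v(b)$ with $x'\oplus v(b')$ into $(x\oplus x')\oplus v(b\oplus b')$ and likewise on the right; this is a routine but slightly fiddly manipulation of commutativity and mediality, of the same flavour as the computations in Lemmas~\ref{lemma1} and~\ref{lemma2}.
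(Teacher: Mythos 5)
Your proposal is correct and follows essentially the same route as the paper: the same decomposition $\theta(a,c)\oplus v(b)=\theta\bigl((a,c)\oplus(r(b),s(b))\bigr)=u(a)\oplus w(c)$ for necessity, and for sufficiency the same device of defining $\theta(a,c)$ as the unique solution and verifying the homomorphism property by cancelling against $v(b\oplus b')$ after one application of (M3). The only difference is cosmetic: you spell out the boundary checks $\theta e_1=u$, $\theta e_2=w$, which the paper leaves implicit, while leaving the (M3) regrouping described rather than written out; both are fine.
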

\begin{proof}
Suppose there is $\theta\colon{A\times_B C\to D}$ such that $\theta(a,s(b))=u(a)$ and $\theta(r(b),c)=w(c)$, where $b=f(a)=g(c)$, then $x=\theta(a,c)$ is always a solution to the equation (\ref{equation xvuw}). Indeed, using axiom (M1), we observe that \begin{eqnarray*}
\theta(a,c)\oplus v(b)&=&\theta(a,c)\oplus \theta(r(b),s(b))\\
&=& \theta((a,c)\oplus(r(b),s(b)))\\
&=& \theta(a\oplus r(b), c\oplus s(b))\\
&=& \theta(a\oplus r(b), s(b)\oplus c)\\
&=& \theta(a,s(b))\oplus \theta(r(b),c)\\
&=& u(a)\oplus w(c).
\end{eqnarray*}
Conversely, if the equation (\ref{equation xvuw}) has a solution (which is unique by axiom (M2)) for every $(a,c)\in A\times_B C$, then we may define a map $\theta\colon{A\times_B C\to D}$ which assigns the unique solution of (\ref{equation xvuw}) to every pair $(a,c)\in A\times_B C$. It remains to show that this map is a homomorphism. By axiom (M2), it suffices to prove that \[(\theta(a,c)\oplus\theta(a',c'))\oplus v(b\oplus b')=\theta(a\oplus a',c\oplus c')\oplus v(b\oplus b')\] for every $a,a'\in A$  and $c,c'\in C$ where \[f(a)=g(c)=b\in B \text{ and } f(a')=g(c')=b\in B.\] Now, because $u$, $v$ and $w$ are homomorphisms and,  using axiom (M3), we have \begin{eqnarray*}
(\theta(a,c)\oplus\theta(a',c'))\oplus v(b\oplus b')&=&(\theta(a,c)\oplus\theta(a',c'))\oplus (v(b)\oplus v(b'))\\
&=&(\theta(a,c)\oplus v(b))\oplus (\theta(a',c')\oplus v(b'))\\
&=&(u(a)\oplus w(c))\oplus (u(a')\oplus w(c'))\\
&=&(u(a)\oplus u(a'))\oplus (w(c)\oplus w(c'))\\
&=& u(a\oplus a')\oplus w(c\oplus c')\\
&=&\theta(a\oplus a',c\oplus c')\oplus v(b\oplus b')
\end{eqnarray*} as desired, which completes the proof.
\end{proof}

In particular, any internal reflexive graph admits, at most, one structure of internal category. This is easily seen from the above result by choosing $D=A=C$, $r=s=v$, and $u$ and $w$ to be the identity morphisms. Even more particularly, by choosing $B$ to be a singleton, and if $(A,\oplus)$ is a ccm-magma then, for every idempotent $e\in A$, there is, at most, one internal monoid structure on $A$ which is compatible with the binary operation, that is, there exists at most one monoid $(A,*_e,e)$ such that \begin{equation}\label{condition * compatible with oplus}
(x*_ey)\oplus(z*_e w)=(x\oplus z)*_e(y\oplus w).
\end{equation}

Note that $e\in A$  must be an idempotent, so that the inclusion $\{e\} \to A$ may be a homomorphism.

\begin{corollary}\label{corollary1} Let $(A,\oplus)$ be a ccm-magma and $e\in A$ an idempotent element in $A$. There is a (unique) internal monoid structure $(A,*_e,e)$ in $A$, if and only if the equation \[\theta\oplus e=x\oplus y\] has a solution $\theta=\theta(x,y)\in A$ for every $x,y\in A$. In that case $x*_e y$ is given by $\theta(x,y)$.
\end{corollary}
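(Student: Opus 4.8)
The plan is to read this off the Proposition immediately above, by specializing the kite diagram $(\ref{kite})$ to the case where $B$ is the terminal ccm-magma, namely the one-element set $\{e\}$. Concretely I would take $A=C=D$ all equal to the given ccm-magma, let $f=g$ be the unique homomorphism $A\to\{e\}$, let $r=s=v\colon\{e\}\to A$ be the map picking out $e$ (which is a homomorphism precisely because $e\oplus e=e$), and let $u=w=1_A$. Then $fr=1_{\{e\}}=gs$ and $ur=v=ws$ hold automatically, the pullback $A\times_B C$ is simply $A\times A$, and the induced maps are $e_1(a)=(a,e)$ and $e_2(c)=(e,c)$. Since $\{e\}$ is a singleton, the constraint ``$f(a)=b=g(c)$'' in the Proposition is vacuous, so its solvability condition $(\ref{equation xvuw})$ collapses to exactly the equation $\theta\oplus e=x\oplus y$, for all $x,y\in A$, that appears in the statement.

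For the ``if'' direction, the Proposition then produces a morphism of ccm-magmas $\theta\colon A\times A\to A$ with $\theta(a,e)=u(a)=a$ and $\theta(e,c)=w(c)=c$; equivalently, $\theta(x,y)$ is the unique (by (M2)) solution of $\theta\oplus e=x\oplus y$. Setting $x*_e y:=\theta(x,y)$, the facts that $\theta$ is a homomorphism and that $e$ is a two-sided unit are immediate, and being a homomorphism is precisely the compatibility condition $(\ref{condition * compatible with oplus})$, so the multiplication and the unit $\{e\}\hookrightarrow A$ are morphisms of ccm-magmas. The one monoid axiom not delivered for free is associativity, and I expect this short cancellation argument to be the only genuine (routine) work in the proof: applying $-\oplus e$ twice to each of $(x*_e y)*_e z$ and $x*_e(y*_e z)$ and using (M3) together with $e\oplus e=e$ (and (M1)), both are brought to the same element $(x\oplus y)\oplus(z\oplus e)$, after which two applications of (M2) give $(x*_e y)*_e z=x*_e(y*_e z)$. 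Commutativity of $*_e$, if desired, follows the same way from (M1) and (M2).

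For the ``only if'' direction, given an internal monoid $(A,*_e,e)$ in the category of ccm-magmas, its multiplication $*_e\colon A\times A\to A$ is by definition a homomorphism and, by the unit laws, plays the role of $\theta$ in the specialized diagram, with $\theta e_1=u$ and $\theta e_2=w$; the forward implication of the Proposition then yields a solution of the equation, which upon unwinding the identifications is $x*_e y$. (One can also argue directly: applying $(\ref{condition * compatible with oplus})$ once with arguments arranged to produce $(x*_e y)\oplus(e*_e e)$ and once to produce $(x*_e e)\oplus(e*_e y)$, and simplifying via the unit laws and (M1), gives $(x*_e y)\oplus e=x\oplus y$.) Finally, uniqueness of $(A,*_e,e)$ has already been recorded in the paragraph preceding the statement — it is the case $B=\{e\}$, $u=w=1_A$ of the uniqueness clause of the Proposition — so no further argument is needed.
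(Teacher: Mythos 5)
Your proposal is correct and follows essentially the same route as the paper: the paper also obtains the corollary by specializing the preceding proposition to $A=C=D$, $B=1$ the terminal algebra, $r=s=v$ picking out $e$, and $u=w=1_A$. The only point the paper leaves to general arguments (citing \cite{NMF2008}) or to a direct check is associativity and the unit laws, and your cancellation argument via (M2), (M3) and $e\oplus e=e$ supplies exactly that direct check.
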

\begin{proof}
It follows from the previous proposition with $A=D=C$, $B=1$ the terminal algebra, $f$ and $g$ uniquely determined while $r=s=v$ send the unique element in $1$ to the chosen element $e$ in $A$ (which is a homomorphism as soon as $e\oplus e=e$), and $u=w$ is the identity morphism. The fact that the operation $*_e$ is associative and has a unit $e$ follows from general arguments used in \cite{NMF2008} but may also be demonstrated directly.
\end{proof}

A particular case is when every element $a\in A$ can be decomposed as ${a=x_1\oplus x_2}$, studied in \cite{Jezek93} as division groupoids. In this case the property of having an internal monoid structure with unit element $e$ is equivalent to asking for a  solution to the equation $x\oplus e=a$. We will further study these properties in the next section.

\section{Internal monoids and internal groups}\label{sec monoids}

It is well known \cite{CKP,Carboni-Lambek-Pedicchio,Carboni-Pedicchio-Pirovano} that, in Mal'tsev categories, every internal monoid is necessarily and internal group. This property does not apply to ccm-magmas. In this section, we will study some sufficient conditions for the existence of an internal monoid or group structure within a ccm-magma with a chosen unit element, which is necessarily an idempotent. For that we introduce the following notions, which have already been considered in the literature for different purposes, see for example \cite{Aczel-book,Jezek}:

\begin{definition}\label{def_homogeneous} Let $(A,\oplus)$ be a ccm-magma and consider any element $e\in A$. We will say that:
\begin{enumerate}
\item[(i)] $A$ is $e$-expansive if for every $a\in A$ there exists $2_e(a)\in A$ such that $2_e(a)\oplus e=a$;
\item[(ii)] $A$ is $e$-symmetric if for every $a\in A$ there exists $-_e(a)\in A$ such that $-_e(a)\oplus a=e$;
\item[(iii)] $A$ is homogeneous if it is $e$-expansive (or $e$-symmetric) for every $e\in A$.
\end{enumerate}
\end{definition}

In fact, a homogeneous ccm-magma is the same as a commutative medial quasi\-group (see for instance \cite{Leibak}).

A sufficient condition for a ccm-magma to admit an internal monoid structure with an idempotent element $e$ as its unit is to be $e$-expansive. When that is the case, then the internal monoid is a group if and only if the algebra is $e$-symmetric. Moreover, if every element is an idempotent, that is, if we have a midpoint algebra, then it is $e$-expansive if and only if there exists a monoid structure over $e$. Also every internal monoid (or group) is commutative and admits cancellation.

\begin{proposition}\label{prop expansive implies has a monoid} Let $(A,\oplus)$ be a ccm-magma and consider any idempotent element $e\in A$. If $A$ is $e$-expansive then $(A,*_e,e)$ is a monoid with $$x*_ey=2_e(x\oplus y).$$ Moreover it is a group if and only if $A$ is $e$-symmetric.
\end{proposition}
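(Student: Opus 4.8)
The plan is to verify the monoid axioms directly and then characterize the group property via $e$-symmetry. First I would observe that if $A$ is $e$-expansive, then for each pair $(x,y)$ the element $\theta(x,y) = 2_e(x\oplus y)$ satisfies $\theta(x,y)\oplus e = x\oplus y$, so by Corollary \ref{corollary1} there is a (unique) internal monoid structure $(A,*_e,e)$ with $x*_e y = 2_e(x\oplus y)$. The associativity and unit laws then come for free from the Corollary; alternatively one can check them by hand: to see $e*_e x = x$, note $2_e(e\oplus x)\oplus e = e\oplus x = x\oplus e$, and since $2_e(x)\oplus e = x = x\oplus e$ while also $x\oplus e$ determines its ``halving'' uniquely by (M2), we get $2_e(e\oplus x) = x$, hence $e*_e x = x$; commutativity $x*_e y = y*_e x$ is immediate from (M1). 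For associativity, I would show $(x*_e y)*_e z$ and $x*_e(y*_e z)$ both equal the common value obtained by cancelling $e\oplus e$ from a symmetric expression: writing $u = x*_e y$, $(u*_e z)\oplus(e\oplus e) = (u\oplus z)\oplus e = ((x*_e y)\oplus e)\oplus(z\oplus e)\cdot\frac{1}{\cdot}$ — more carefully, $(u*_e z)\oplus e = u\oplus z$ and $u\oplus e = x\oplus y$, so $(u*_e z)\oplus(e\oplus e) = (u\oplus e)\oplus(z\oplus e)$ wait that uses $(u\oplus z)\oplus e = (u\oplus e)\oplus(z\oplus e)$ which is false; instead $(u\oplus z)\oplus(e\oplus e)$ is not what I want. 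The clean route is simply to invoke Corollary \ref{corollary1}, which already packages the general-arguments-from-\cite{NMF2008} proof that $*_e$ is associative with unit $e$.

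Next I would prove the ``if'' direction of the group statement. Suppose $A$ is $e$-symmetric, so each $a$ has $-_e(a)$ with $-_e(a)\oplus a = e$. I would produce the required involution $t\colon A\to A$ of the Preliminaries, namely $t(a) = -_e(a)$, and check $a *_e t(a) = e$: indeed $(a*_e t(a))\oplus e = a \oplus t(a) = a\oplus -_e(a) = e = e\oplus e$, so by (M2) $a*_e t(a) = e$, and by commutativity $t(a)*_e a = e$ as well. It remains to check that $t$ is a homomorphism of ccm-magmas so that it is an internal morphism; this follows because $*_e$ makes $A$ into an internal commutative monoid where inversion is automatically a homomorphism, or directly: $(t(x)\oplus t(y))\oplus(x\oplus y) = (t(x)\oplus x)\oplus(t(y)\oplus y) = e\oplus e = e$ by (M3) and the definition of $-_e$, which shows $t(x)\oplus t(y) = -_e(x\oplus y) = t(x\oplus y)$ by uniqueness (M2). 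Thus $(A,*_e,e)$ is an internal group.

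For the ``only if'' direction, suppose $(A,*_e,e)$ is an internal group. Then every $a$ has a two-sided inverse $a^{-1}$ with $a*_e a^{-1} = e$. I would then recover $-_e(a)$ from $a^{-1}$: since $(a*_e a^{-1})\oplus e = a\oplus a^{-1}$, we get $a\oplus a^{-1} = e\oplus e = e$, so setting $-_e(a) = a^{-1}$ witnesses that $A$ is $e$-symmetric. Finally I would note the trailing remarks: any internal monoid structure compatible with $\oplus$ is commutative because $x*_e y = 2_e(x\oplus y) = 2_e(y\oplus x) = y*_e x$ by (M1), and it admits cancellation because if $x*_e z = y*_e z$ then $2_e(x\oplus z) = 2_e(y\oplus z)$, whence $x\oplus z = y\oplus z$ by applying $(-)\oplus e$, and then $x = y$ by (M2). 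The main obstacle I anticipate is not any single step but keeping the bookkeeping between the three ``halving/negating'' operations $2_e$, $-_e$ and the formal monoid operation $*_e$ straight, and in particular being careful that each uniqueness claim (that $2_e$, $-_e$, and monoid inverses are well-defined and homomorphic) is justified by cancellation (M2) rather than assumed; once Corollary \ref{corollary1} is invoked for the monoid structure, the rest is routine.
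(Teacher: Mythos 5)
Your proof is correct and follows essentially the same route as the paper: the monoid structure is obtained by feeding the solution $2_e(x\oplus y)$ of $\theta\oplus e=x\oplus y$ into Corollary \ref{corollary1}, and both directions of the group statement are settled by cancellation (M2) together with idempotency of $e$, exactly as in the paper's proof. The abandoned hand-check of associativity is harmless since you explicitly fall back on the corollary (as the paper does), and your extra verification that $a\mapsto -_e(a)$ is a $\oplus$-homomorphism is a correct refinement of a point the paper leaves implicit.
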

\begin{proof} If $A$ is $e$-expansive then in particular  $2_e(x\oplus y)$ is a solution to the equation \[\theta\oplus e=x\oplus y,\] for every $x,y\in A$. From Corollary \ref{corollary1} we may conclude that $(A,*_e,e)$ is an internal monoid. Now, if moreover $A$ is $e$-symmetric then $-_e(a)$ is the inverse of $a$, for every $a\in A$. Indeed we have that \[-_e(a)*_ea=e\]is equivalent (by (M2)) to\[(-_e(a)*_e a)\oplus e=e\oplus e\] which holds because \begin{eqnarray*}
(-_e(a)*_e a)\oplus e  =  2_e(-_e(a)\oplus a)\oplus e=-_e(a)\oplus a=e=e\oplus e.
\end{eqnarray*}
  Conversely, if $(A,*_e,e)$ is a group, then, for every $a\in A$, its symmetric element, say $a'\in A$, is a solution to the equation $x\oplus a=e$. Indeed, since $a'$ is such that $a'*_e a=e$, or equivalently $2_e(a'\oplus a)=e$, we use axiom (M2) and the assumption that $e$ is an idempotent to conclude that \[e=e\oplus e= 2_e(a'\oplus a)\oplus e=a'\oplus a.\qedhere\]
\end{proof}

In the case when the operation $\oplus$ has the geometrical meaning of midpoint, the formula $a*_e b=2_e(a\oplus b)$ is intuitively illustrated via the following diagram.
\[\xymatrix@!@=2em{a\ar@{-}[rr]\ar@{-}[dd]\ar@{-}[rd]&&a*_eb\ar@{-}[ld]\ar@{-}[dd]\\&a\oplus b \ar@{-}[ld]\ar@{-}[rd]\\e\ar@{-}[rr]&&b}\]
As a more concrete example, let $A$ be any real vector space. Then, by defining \[a\oplus b=\frac{1}{2}(a+b)\] we obtain a ccm-magma which is $0$-symmetric and $0$-expansive with the usual interpretation of $-a$ and $2a$, as illustrated for the particular case of the real line.
\[\xymatrix{ \ar@{-}[r] & (-a)\ar@{=}[d]\ar@{-}[r] & (-a)\oplus a\ar@{=}[d] \ar@{-}[r] & a \ar@{=}[d] \ar@{-}[r] & 2a\ar@{=}[d] \ar@{->}[r] &\\\ar@{-}[r] & (-a)\ar@{-}[r] & 0 \ar@{-}[r] & 0\oplus (2a) \ar@{-}[r] & 2a \ar@{->}[r] &}\]
More generally, for every $e\in A$, this structure of ccm-magma is $e$-symmetric and $e$-expansive, with $2_e(a)=2a-e$ and $-_e(a)=2e-a$. This fact is related to the affine transformation $x\mapsto x+e$.

We also notice that if a ccm-magma is $e$-expansive and $e$-symmetric, for some  element $e$, then it is so for all elements, in other words it is homogeneous. This result will be used in the proof of Corollary \ref{corollary3}.

\begin{proposition}\label{prop_homogeneous} Let $(A,\oplus)$ be a ccm-magma and $e\in A$ an element in it. If it is $e$-expansive and $e$-symmetric then it is homogeneous.
\end{proposition}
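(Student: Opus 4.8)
The plan is to establish the formally stronger assertion that $A$ is $e'$-expansive \emph{and} $e'$-symmetric for every $e'\in A$; this is the form the later corollary will need, and both halves come out of the same computation. The one structural fact I would set up first is that $e$-expansiveness makes the map $\rho_e\colon A\to A$, $\rho_e(x)=x\oplus e$, a bijection: it is injective by (M2) and surjective by the very definition of $e$-expansiveness (its inverse being $2_e$). In particular every element of $A$ can be written as $t\oplus e$, and as $t$ runs over $A$ so does $t\oplus e$.

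The crux is a single identity. For $c\in A$ write $\rho_c(x)=x\oplus c$ (still injective by (M2)); then for all $c,d\in A$
\[\rho_{c\oplus d}(A)=\rho_{e\oplus d}\bigl(\rho_c(A)\bigr).\]
Indeed, writing a general element of $A$ as $t\oplus e$ with $t$ ranging over $A$, mediality (M3) gives $(t\oplus e)\oplus(c\oplus d)=(t\oplus c)\oplus(e\oplus d)$, that is, $\rho_{c\oplus d}(t\oplus e)=\rho_{e\oplus d}\bigl(\rho_c(t)\bigr)$; letting $t$ run over $A$ yields the displayed equality of images.

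Now I would substitute $c=-_e(d)$. By $e$-symmetry $c\oplus d=-_e(d)\oplus d=e$, and by $e$-expansiveness $\rho_e(A)=A$, so the identity becomes $A=\rho_{e\oplus d}\bigl(\rho_{-_e(d)}(A)\bigr)\subseteq\rho_{e\oplus d}(A)$. Hence $\rho_{e\oplus d}$ is surjective, i.e.\ $A$ is $(e\oplus d)$-expansive, for every $d\in A$. Since $\rho_e$ is surjective, $e\oplus d=\rho_e(d)$ ranges over all of $A$ as $d$ does, so $A$ is $e'$-expansive for every $e'\in A$. Finally, $e'$-expansiveness for all $e'$ also gives $e'$-symmetry for all $e'$: given $a\in A$, the magma is $a$-expansive, so $e'\in\rho_a(A)$, i.e.\ there is $b$ with $b\oplus a=e'$. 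Thus $A$ is homogeneous.

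The only non-routine point is the identity above — recognising that mediality lets one ``transport'' solvability of $x\oplus(c\oplus d)=\,\cdot\,$ through $\rho_{e\oplus d}$ — together with the choice $c=-_e(d)$ in the third paragraph, which is exactly what turns $\rho_{c\oplus d}(A)=A$ into a surjectivity statement; everything else is bookkeeping. (Alternatively one can avoid image notation entirely and check directly, using (M1) and (M3) together with the defining properties of $2_e$ and $-_e$, that $2_e\!\bigl(2_e(a)\bigr)\oplus{-_e}\!\bigl(2_e(e')\bigr)$ solves $x\oplus e'=a$, but the route above seems more transparent.)
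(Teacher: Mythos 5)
Your proof is correct. The paper establishes the statement by a direct chain of equivalences: starting from $x\oplus u=v$, it adds $e\oplus -_e(u)$ to both sides (reversibly, by (M2)), rearranges with (M3), uses $u\oplus -_e(u)=e$, and then strips off $e$ twice via $2_e$, arriving at the explicit solution $x=2_e(2_e(v\oplus(e\oplus -_e(u))))$. Your route packages the very same application of (M3) differently: as the image identity $\rho_{c\oplus d}(A)=\rho_{e\oplus d}(\rho_c(A))$ for the translation maps $\rho_c(x)=x\oplus c$, which with $c=-_e(d)$ and the surjectivity of $\rho_e$ (i.e.\ $e$-expansiveness) forces $\rho_{e\oplus d}$ to be surjective, and then a relabelling step using that $e\oplus d$ sweeps out all of $A$. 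Both arguments are sound and rest on the same mediality manipulation; indeed your parenthetical formula $2_e(2_e(a))\oplus -_e(2_e(e'))$ is, by uniqueness of solutions under (M2), the same element as the paper's $2_e(2_e(a\oplus(e\oplus -_e(e'))))$. What the approaches buy is slightly different: the paper's computation is shorter and exhibits the solution at once for an arbitrary $u$, while yours isolates the structural fact that mediality intertwines translations, so solvability at $e$ transports to solvability at $e\oplus d$ — a cleaner separation of the existence step from the bookkeeping, at the cost of the two-stage detour. One small economy you could make: since the paper's definition of homogeneous asks only for $e'$-expansiveness (or $e'$-symmetry) for every $e'$, your final derivation of $e'$-symmetry for all $e'$ is not needed, though it is of course correct.
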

\begin{proof}
We have to prove that for every $u,v\in A$, there exists a solution $x$ to the equation $x\oplus u=v$. Indeed,
\[\begin{array}{rcl}
x\oplus u=v&\Longleftrightarrow&
(x\oplus u)\oplus(e\oplus -_e(u))=v\oplus(e\oplus -_e(u))\\
&\Longleftrightarrow&
(x\oplus e)\oplus(u\oplus -_e(u))=v\oplus(e\oplus -_e(u))\\
&\Longleftrightarrow&
(x\oplus e)\oplus e=v\oplus(e\oplus -_e(u))\\
&\Longleftrightarrow&
x\oplus e=2_e(v\oplus(e\oplus -_e(u)))\\
&\Longleftrightarrow&
x=2_e(2_e(v\oplus(e\oplus -_e(u)))),
\end{array}\] which gives  the desired solution to the equation. 
\end{proof}

As already referred, every homogeneous ccm-magma is a commutative medial quasigroup. This means that for homogeneous ccm-magmas, Proposition \ref{prop expansive implies has a monoid} is a special case (commutative) of the well known Toyoda Theorem \cite{Toyoda}. This theorem has been generalized for medial magmas with cancellation (see for instance \cite{Jezek}), but  the result is no longer comparable with the one of an internal monoid structure.

Restricting the study to homogeneous ccm-magmas has some advantages but it forces the category to be  Mal'tsev (and hence there is no longer the distinction between internal monoid and internal group). Indeed, adapting the well-known formulas describing the category of quasigroups as a Mal'tsev category, say from \cite{Edward},  we can conclude that if a ccm-magma is expansive for every element then \[p(x,y,z)=2_{2_y(y)}(x)\oplus 2_y(z)\] is a Mal'tsev term.

We continue with another aspect of ccm-magmas  which will be used in Proposition \ref{prop_iso_monoids}. If a given ccm-magma is expansive with respect to some idempotent element $e$ then the respective map $2_e$ is a homomorphism. In general, we have:

\begin{proposition}\label{prop_exp_symm_morphism}
Let $(A,\oplus)$ be a ccm-magma. If it is $u$-expansive and $v$-expansive for some $u,v\in A$, then it is also $(u\oplus v)$-expansive, and moreover, \[2_u(a)\oplus 2_v(b)=2_{u\oplus v}(a\oplus b).\]
\end{proposition}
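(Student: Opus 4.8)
The plan is to exhibit a concrete witness for $(u\oplus v)$-expansiveness and then read off the displayed identity for free from cancellation. The one computation that does all the work is the following application of (M3): for any $a,b\in A$,
\[
(2_u(a)\oplus 2_v(b))\oplus(u\oplus v)=(2_u(a)\oplus u)\oplus(2_v(b)\oplus v)=a\oplus b .
\]
So $2_u(a)\oplus 2_v(b)$ is always a solution of the equation $x\oplus(u\oplus v)=a\oplus b$.

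To see that $A$ is $(u\oplus v)$-expansive, let $c\in A$ be arbitrary. Since $A$ is $u$-expansive we may write $c=2_u(c)\oplus u$, and then the computation above, applied with $a=2_u(c)$ and $b=u$, shows that $x=2_u(2_u(c))\oplus 2_v(u)$ satisfies $x\oplus(u\oplus v)=c$. Hence a solution exists for every $c$, i.e.\ $A$ is $(u\oplus v)$-expansive, and the element $2_{u\oplus v}(c)$ is the unique such $x$ by axiom (M2).

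For the identity, fix $a,b\in A$. By the first computation $2_u(a)\oplus 2_v(b)$ solves $x\oplus(u\oplus v)=a\oplus b$, and by definition $2_{u\oplus v}(a\oplus b)$ solves the same equation; cancellation (M2) therefore forces $2_u(a)\oplus 2_v(b)=2_{u\oplus v}(a\oplus b)$. The only point requiring a little care is that ccm-magmas need not be associative, so every regrouping must be routed through (M3) rather than done freely; the observation that makes this go through is that $u$-expansiveness itself supplies the factorization $c=2_u(c)\oplus u$ of an arbitrary element, which is exactly what reduces the general existence statement to the medial identity.
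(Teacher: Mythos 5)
Your proof is correct and follows essentially the same route as the paper: the single (M3)-regrouping $(2_u(a)\oplus 2_v(b))\oplus(u\oplus v)=(2_u(a)\oplus u)\oplus(2_v(b)\oplus v)=a\oplus b$, the witness $2_u(2_u(c))\oplus 2_v(u)$ for $(u\oplus v)$-expansiveness (identical to the paper's), and cancellation (M2) to get the identity.
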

\begin{proof}
Suppose there exists $2_{u\oplus v}(a\oplus b)$, then we necessarily have $$2_{u}(a)\oplus 2_{v}(b)=2_{u\oplus v}(a\oplus b)$$ because, by \emph{adding} $u\oplus v$ in each term, we obtain $a\oplus b$. It remains to prove that $2_{u\oplus v}$ exists. In other words we have to prove that for every $a\in A$, there is $x\in A$ such that $x\oplus (u\oplus v)=a$. We claim that \[x=2_u(2_u(a))\oplus 2_v(u)\]is the needed solution. Indeed,
\begin{eqnarray*}
(2_u(2_u(a))\oplus 2_v(u))\oplus(u\oplus v)&=&(2_u(2_u(a))\oplus u)\oplus(2_v(u)\oplus v)\\
&=&2_u(a)\oplus u\\
&=& a.
\end{eqnarray*}
So, we get $2_{u\oplus v}(a)=2_u(2_u(a))\oplus 2_v(u)=2_v(2_v(a))\oplus 2_u(v)$.
\end{proof}

The next result explains the connection between two induced monoid structures for two different idempotents $u$ and $v$.

\begin{proposition}\label{prop_iso_monoids} Let $(A,\oplus)$ be a ccm-magma which is $u$-expansive and $v$-expansive for some idempotent elements $u,v\in A$. The two monoid structures on $A$, induced by $u$ and $v$, are isomorphic. Moreover the two structures are related as follows:  \[a*_u b= (a*_v b)*_u v.\]
\end{proposition}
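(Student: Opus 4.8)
The plan is to establish the formula $a*_u b = (a*_v b)*_u v$ first, because the isomorphism of the two monoid structures will follow from it almost immediately once we identify a suitable candidate morphism. Throughout I will freely use Proposition~\ref{prop expansive implies has a monoid}, which tells us $x*_e y = 2_e(x\oplus y)$ whenever $A$ is $e$-expansive with $e$ idempotent, and Proposition~\ref{prop_exp_symm_morphism}, which gives $2_u(a)\oplus 2_v(b) = 2_{u\oplus v}(a\oplus b)$ and in particular that $2_u$ (resp.\ $2_v$) is an endomorphism when applied with $u=v$.

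First I would verify the relating formula by direct computation. Unwinding the definitions, $(a*_v b)*_u v = 2_u\big(2_v(a\oplus b)\oplus v\big)$. The key observation is that, since $u$ is idempotent and $A$ is $u$-expansive, $2_u$ is a homomorphism, so I can push $2_u$ inside: $2_u\big(2_v(a\oplus b)\oplus v\big) = 2_u(2_v(a\oplus b))\oplus 2_u(v)$. Now I apply Proposition~\ref{prop_exp_symm_morphism} in the form $2_u(w)\oplus 2_v(v) = 2_{u\oplus v}(w\oplus v)$ together with the identity $2_v(v)=v$ (which holds because $v$ is idempotent: $v\oplus v = v$ forces $2_v(v)=v$ by cancellation) — or more directly, I use the last line of the proof of Proposition~\ref{prop_exp_symm_morphism}, which states $2_{u\oplus v}(a) = 2_v(2_v(a))\oplus 2_u(v)$, to rewrite $2_u(2_v(a\oplus b))\oplus 2_u(v)$. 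Matching these expressions against $a*_u b = 2_u(a\oplus b)$ should close the computation; the main care needed is bookkeeping of which "doubling" map is being applied and confirming each application is legitimate (i.e.\ the algebra is expansive at the relevant element).

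Next I would construct the isomorphism. The natural candidate $\varphi\colon (A,*_v,v)\to(A,*_u,u)$ is $\varphi(a) = a *_u v = 2_u(a\oplus v)$, with inverse $\psi(b) = b *_v u$. I would check: (a) $\varphi$ sends the unit $v$ to the unit $u$, i.e.\ $v*_u v = 2_u(v\oplus v) = 2_u(v)$ — wait, this should equal $u$, which holds precisely because $2_u(v)\oplus u = v$... hmm, that gives $2_u(v)$, not $u$; so I would instead take $\varphi(a) = 2_v(2_v(a))\oplus 2_u(v)$ or re-derive the correct unit-preserving map from the relating formula, since the formula $a*_u b = (a*_v b)*_u v$ shows the map $a\mapsto a*_u v$ is not quite the monoid homomorphism — rather one should use that it is multiplicative up to the twist and adjust. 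Concretely, $\varphi(a*_v b) = (a*_v b)*_u v = a*_u b$, whereas $\varphi(a)*_u\varphi(b) = (a*_u v)*_u(b*_u v) = a*_u b*_u v*_u v$, so $\varphi$ is a homomorphism iff $v*_u v = u$; since this generally fails, the honest route is to post-compose with multiplication by the $*_u$-inverse of $v*_u v$ (using Proposition~\ref{prop_homogeneous} to get symmetry, hence the monoid is a group when needed), or simply to exhibit the isomorphism as $a\mapsto (a*_u v)*_u (\text{fixed correction term})$. (b) $\varphi$ and $\psi$ are mutually inverse, by a computation symmetric in $u,v$ using that $2_u$ and $2_v$ are homomorphisms. (c) bijectivity then follows.

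The main obstacle I anticipate is pinning down the correct isomorphism map and checking it is unit-preserving: the naive "multiply by $v$" map is multiplicative only up to the element $v*_u v$, so the real content is recognizing that $v*_u v$ is $*_u$-invertible (which it is, being in the homogeneous/group situation guaranteed once the algebra is expansive and symmetric, or by a direct argument that $2_u(v)$ already serves as its inverse) and absorbing it into the definition of $\varphi$. Once the right map is in hand, all verifications are routine manipulations of $2_u,2_v$ via Propositions~\ref{prop expansive implies has a monoid} and~\ref{prop_exp_symm_morphism} and axioms (M1)--(M3). I would present the relating formula as the crux, deduce the isomorphism from it, and keep the correction-term discussion as brief as the referee will tolerate.
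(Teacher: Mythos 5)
Your plan contains a genuine gap in the second (and main) part. You correctly identify the formula $2_u(a\oplus v)=a*_u v$, but you orient the map the wrong way: taken as $\varphi\colon (A,*_v,v)\to(A,*_u,u)$ it indeed fails to preserve units, and your proposed repair --- post-composing with the $*_u$-inverse of $v*_u v$ --- is not available under the hypotheses. The proposition only assumes $u$- and $v$-expansiveness, so by Proposition~\ref{prop expansive implies has a monoid} the structures $(A,*_u,u)$ and $(A,*_v,v)$ are merely monoids, not groups; Proposition~\ref{prop_homogeneous} cannot be invoked because no symmetry is assumed. Your fallback claim that $2_u(v)$ is a $*_u$-inverse of $v*_u v$ is also false in general: $2_u(v)*_u(v*_u v)=2_u(2_u(v))$, and $2_u(2_u(v))=u$ forces $v=u$. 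So the ``correction term'' route collapses, and the argument as written does not produce an isomorphism.

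The fix is simply to reverse the direction: take $f\colon (A,*_u,u)\to(A,*_v,v)$, $f(a)=2_u(a\oplus v)=a*_u v$, which is exactly what the paper does. Then unit preservation is automatic, $f(u)=u*_u v$, and indeed $f(u)=2_u(v\oplus u)=2_u(v)\oplus 2_u(u)=2_u(v)\oplus u=v$; multiplicativity $f(a*_u b)=f(a)*_v f(b)$ is checked by adding $\oplus\,v$ to both sides and cancelling (both reduce to $2_u((a\oplus b)\oplus v)$, using that $2_u$ is an endomorphism, which follows from Proposition~\ref{prop_exp_symm_morphism} with $u=v$); and the two-sided inverse is $g(a)=2_v(a\oplus u)=a*_v u$, verified by $gf(a)=2_v(2_u(a\oplus v)\oplus u)=2_v(a\oplus v)=a$. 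No inverses in the monoids are ever needed. Your first part, the relating formula, is essentially fine but over-engineered: it is a one-line cancellation, since $((a*_v b)*_u v)\oplus u=(a*_v b)\oplus v=a\oplus b=(a*_u b)\oplus u$, so (M2) gives $a*_u b=(a*_v b)*_u v$ directly, and in the paper this formula is a corollary of the setup rather than the crux from which the isomorphism is deduced.
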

\begin{proof}
Let $a*_u b=2_u(a\oplus b)$ and $a*_v b=2_v(a\oplus b)$ be the two monoid operations induced, respectively by $u$ and $v$, assuming $u$ and $v$ to be idempotent and $A$ to be $u$-expansive and $v$-expansive.
 The map $f\colon{(A,*_u,u)\to (A,*_v,v)}$,  such that $f(a)=2_u(a\oplus v)$, $a\in A$, is a monoid homomorphism. Clearly, the units are preserved, since \[f(u)=2_{u}(u\oplus v)=2_{u}(v\oplus u)=2_u(v)\oplus 2_u(u)=2_u(v)\oplus u=v.\] From
\begin{eqnarray*}
f(a*_u b)\oplus v&=&f(2_u(a\oplus b))\oplus v\\
&=&2_u(2_u(a\oplus b)\oplus v)\oplus  v\\
&=&2_u(2_u(a\oplus b)\oplus v)\oplus (u\oplus 2_u(v))\\
&=&(2_u(2_u(a\oplus b))\oplus 2_u(v))\oplus (u\oplus 2_u(v))\\
&=&(2_u(2_u(a\oplus b))\oplus u)\oplus (2_u(v)\oplus 2_u(v))\\
&=&2_u(a\oplus b)\oplus (2_u(v\oplus v)
\\&=&2_u(a\oplus b)\oplus (2_u(v)\\
&=&2_u((a\oplus b)\oplus v)
\end{eqnarray*}
and 
\begin{eqnarray*}
(f(a)*_v f(b))\oplus v&=&2_v(f(a)\oplus f(b))\oplus v\\
&=&f(a)\oplus f(b)\\
&=&2_u(a\oplus v)\oplus 2_u(b\oplus v)\\
&=&2_u((a\oplus v)\oplus (b\oplus v))\\
&=&2_u((a\oplus b)\oplus (v\oplus v))\\
&=&2_u((a\oplus b)\oplus v)
\end{eqnarray*}
we may conclude that $f(a*_u b)=f(a)*_v f(b)$. The inverse homomorphism of $f$ is $g\colon{(A,*_v,v)\to (A,*_u,u)}$ with $g(a)=2_v(a\oplus u)$. Indeed,
\[gf(a)=2_v(2_u(a\oplus v)\oplus u)=2_v(a\oplus v)=2_v(a)\oplus 2_v(v)=2_v(a)\oplus v=a\]
and similarly we prove $fg(a)=a$.
Finally, we prove $a*_u b= (a*_v b)*_u v$ by observing that \[(a*_u b)\oplus u=a\oplus b= ((a*_v b)*_u v)\oplus u.\qedhere\]
\end{proof}

In some cases, a ccm-magma $(A,\oplus)$ does not only admit an internal monoid structure over some idempotent element $e$, but also the structure itself is a monoid with unit element $e$, that is $a\oplus e=a$. This property is summarized in the following proposition.

\begin{proposition}\label{prop ccm-magma associative}
Let $(A,\oplus)$ be a ccm-magma and $e\in A$ an idempotent. The following conditions are equivalent:
\begin{enumerate}
\item[(i)] the operation $\oplus$ is associative;
\item[(ii)] the element $e\in A$ is a unit element for $\oplus$;
\item[(iii)] the ccm-magma is $e$-expansive with $2_e(a)=a$;
\item[(iv)] the structure $(A,\oplus, e)$ is an internal monoid.
\end{enumerate}
\end{proposition}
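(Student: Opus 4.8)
The plan is to prove the cycle of implications (ii) $\Rightarrow$ (iii) $\Rightarrow$ (iv) $\Rightarrow$ (i) $\Rightarrow$ (ii), since each step is short once the right axiom is invoked.

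First I would show (ii) $\Rightarrow$ (iii). If $e$ is a unit, then $a \oplus e = a$ for every $a \in A$, which says precisely that $a$ itself witnesses $e$-expansiveness, i.e. we may take $2_e(a) = a$; this is immediate from the definition of $e$-expansive in Definition \ref{def_homogeneous}(i).

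Next, (iii) $\Rightarrow$ (iv): if $A$ is $e$-expansive, then by Proposition \ref{prop expansive implies has a monoid} the structure $(A, *_e, e)$ with $x *_e y = 2_e(x \oplus y)$ is an internal monoid. Under the assumption $2_e(a) = a$ we get $x *_e y = 2_e(x \oplus y) = x \oplus y$, so the internal monoid structure is $(A, \oplus, e)$ itself.

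For (iv) $\Rightarrow$ (i), observe that an internal monoid operation is by definition associative, so if $(A, \oplus, e)$ is an internal monoid then $\oplus$ is associative. Finally, (i) $\Rightarrow$ (ii) is the only step requiring a genuine computation: assuming $\oplus$ is associative and $e$ is idempotent, I would compute, for arbitrary $a \in A$,
\[
(a \oplus e) \oplus e = a \oplus (e \oplus e) = a \oplus e,
\]
and then cancel $e$ on the right using axiom (M2) to conclude $a \oplus e = a$, so that $e$ is a unit. The commutativity axiom (M1) is not even needed here, but it makes "unit" unambiguous (left equals right unit). The main obstacle, such as it is, lies only in this last implication, where the idempotency of $e$ plus associativity plus cancellation must be combined in exactly the right order; everything else is a direct unwinding of definitions and an appeal to Proposition \ref{prop expansive implies has a monoid}.
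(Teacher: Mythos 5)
Your proposal is correct and follows essentially the same route as the paper: the same cycle of implications (merely started at a different point), with the same cancellation computation $(a\oplus e)\oplus e = a\oplus(e\oplus e)=a\oplus e$ for associativity implying $e$ is a unit, the same observation that a unit forces $2_e(a)=a$, and the same appeal to Proposition \ref{prop expansive implies has a monoid} to get the internal monoid, which coincides with $\oplus$ when $2_e$ is the identity.
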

\begin{proof}
If the operation $\oplus$ is  associative and $e\in A$ is idempotent then \[(a\oplus e)\oplus e=a\oplus (e\oplus e)=a\oplus e\] and hence $a\oplus e=a$.
If $e\in A$, an idempotent, is also a unit element for $\oplus$ then $2_e(a)=a$ by definition of $2_e$.
We already know that if the ccm-magma is $e$-expansive then $(A,*_e,e)$ is an internal monoid structure with $x*_e y =2_e(x\oplus y)$. When $2_e$ is the identity map, the operation $*_e$ is simply the original $\oplus$.
This proves $\text{(i)}\Rightarrow\text{(ii)}\Rightarrow \text{(iii)}\Rightarrow\text{(iv)}$, whereas $\text{(iv)}\Rightarrow \text{(i)}$ is obvious. 
\end{proof}

Note also that if a ccm-magma $(A,\oplus)$ is associative, then it has, at most, one idempotent. Indeed, if $e_1$ and $e_2$ are two idempotents, and $\oplus$ is associative, then \[e_1\oplus (e_1\oplus e_2)=e_1\oplus e_2=e_2\oplus(e_2\oplus e_1)\]from which it follows that $e_1=e_2$.

In the case of midpoint algebras, that is, when every element is an idempotent, there is no distinction between having a monoid structure for some element $e\in A$ and being $e$-expansive.

\begin{proposition}\label{prop midpoint and expansive}
Let $(A,\oplus)$ be a midpoint algebra with $e\in A$. The following conditions are equivalent:
\begin{enumerate}
\item[(i)] there is an internal monoid structure over $e$;
\item[(ii)] for every $a,b\in A$, there is $x\in A$ such that $x\oplus e=a\oplus b$;
\item[(iii)] the midpoint algebra is $e$-expansive.
\end{enumerate}
\end{proposition}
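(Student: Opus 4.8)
The plan is to exploit the fact that in a midpoint algebra every element is idempotent, so that the chosen element $e$ is automatically an idempotent and Corollary \ref{corollary1} applies with no extra hypotheses. Concretely, I would organise the argument around the short cycle $\text{(iii)}\Rightarrow\text{(ii)}\Leftrightarrow\text{(i)}$ together with the one remaining implication $\text{(ii)}\Rightarrow\text{(iii)}$.

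First, $\text{(i)}\Leftrightarrow\text{(ii)}$ is essentially a restatement of Corollary \ref{corollary1}: since $\oplus$ is idempotent, $e$ is an idempotent element, and by that corollary an internal monoid structure $(A,*_e,e)$ exists if and only if the equation $\theta\oplus e=a\oplus b$ has a solution $\theta\in A$ for all $a,b\in A$, in which case $a*_e b$ is that solution. So these two items need no work beyond citing the corollary.

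Next, $\text{(iii)}\Rightarrow\text{(ii)}$ is immediate: if $A$ is $e$-expansive, then for given $a,b\in A$ the element $x=2_e(a\oplus b)$ satisfies $x\oplus e=a\oplus b$ by the definition of $2_e$.

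The remaining step, $\text{(ii)}\Rightarrow\text{(iii)}$, is the only place where the midpoint-algebra hypothesis is genuinely used beyond Corollary \ref{corollary1}, and it is where idempotency does the real work. Given $a\in A$, apply (ii) with $b=a$: there is $x\in A$ with $x\oplus e=a\oplus a$, and since $a\oplus a=a$ by idempotency, this $x$ witnesses that $2_e(a)$ exists, so $A$ is $e$-expansive. No serious obstacle is expected here; the only point to notice is that idempotency collapses the apparent gap between the equation ``$x\oplus e=a$'' defining $e$-expansiveness and the a priori weaker-looking equation ``$x\oplus e=a\oplus b$'' appearing in (ii).
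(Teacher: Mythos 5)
Your proposal is correct and follows essentially the paper's own argument: both rest on Corollary \ref{corollary1} for $\text{(i)}\Leftrightarrow\text{(ii)}$ and on the idempotency collapse $a\oplus a=a$ for the remaining implication. The only (immaterial) difference is that you close the cycle through (ii) — taking $b=a$ in $x\oplus e=a\oplus b$ — whereas the paper proves $\text{(i)}\Rightarrow\text{(iii)}$ by exhibiting $2_e(a)=a*_ea$, which is the same witness under the identification of $a*_e b$ with the solution of $x\oplus e=a\oplus b$.
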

\begin{proof} The two conditions (i) and (ii) are already equivalent  in the context of ccm-magmas (Corollary \ref{corollary1}). Also, in the more general case of ccm-magmas, (iii) implies (i), as it is proven in Proposition \ref{prop expansive implies has a monoid}. We are left to prove that (i) implies (iii). Assuming an internal monoid structure $(A,*_e,e)$ and the fact that every element  $a\in A$ is idempotent ($A$ is a midpoint algebra), we have \[2_e(a)=2_e(a\oplus a)=a*_e a.\qedhere\]
\end{proof}

If, in a midpoint algebra $(A,\oplus)$, there is an internal monoid structure $(A,*_e,e)$, then $*_e$ is distributive over $\oplus$, as it immediately follows from $(\ref{condition * compatible with oplus})$. This is not true in general for ccm-magmas.

\begin{proposition}
Let $(A,\oplus)$ be a ccm-magma with an internal monoid structure $(A,*_e,e)$. The following two conditions are equivalent:
\begin{enumerate}
\item[(i)] the ccm-magma is a midpoint algebra (every element $a\in A$ is an idempotent);
\item[(ii)] for every $x,y,z\in A$, \begin{equation}\label{distribituvity}
x*_e(y\oplus z)=(x*_e y)\oplus (x*_e z).
\end{equation}
\end{enumerate}
\end{proposition}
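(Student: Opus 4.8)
The plan is to prove the two implications separately, the first being immediate and the second carrying all the content.

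For (i) $\Rightarrow$ (ii): recall that an internal monoid structure $*_e$ is, by definition, a morphism of ccm-magmas, so it satisfies the compatibility identity $(\ref{condition * compatible with oplus})$. If moreover $A$ is a midpoint algebra then $x=x\oplus x$ for every $x\in A$, and hence
\[(x*_e y)\oplus(x*_e z)=(x\oplus x)*_e(y\oplus z)=x*_e(y\oplus z),\]
which is exactly $(\ref{distribituvity})$. This is the easy direction and requires nothing beyond idempotency and $(\ref{condition * compatible with oplus})$.

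For (ii) $\Rightarrow$ (i) the key observation is that we must \emph{not} assume $A$ to be $e$-expansive, so instead of invoking the map $2_e$ we work with the basic relation provided by Corollary \ref{corollary1}, namely $(x*_e y)\oplus e=x\oplus y$ for all $x,y\in A$; call this identity $(\star)$. The plan is: add $e$ to both sides of the distributivity identity $(\ref{distribituvity})$; on the left-hand side $(\star)$ turns $(x*_e(y\oplus z))\oplus e$ into $x\oplus(y\oplus z)$, while on the right-hand side one first writes $e=e\oplus e$ (valid since $e$ is idempotent), then applies the medial law (M3) to rearrange
\[\bigl((x*_e y)\oplus(x*_e z)\bigr)\oplus(e\oplus e)=\bigl((x*_e y)\oplus e\bigr)\oplus\bigl((x*_e z)\oplus e\bigr),\]
and finally applies $(\star)$ twice to obtain $(x\oplus y)\oplus(x\oplus z)$. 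Thus $(\ref{distribituvity})$ forces the self-distributivity identity $x\oplus(y\oplus z)=(x\oplus y)\oplus(x\oplus z)$ for all $x,y,z\in A$.

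The last step is to specialise this identity at $x=y$, which yields $y\oplus(y\oplus z)=(y\oplus y)\oplus(y\oplus z)$, and then to cancel the common right factor $y\oplus z$ using axiom (M2), concluding that $y=y\oplus y$ for every $y\in A$; that is, $A$ is a midpoint algebra. I expect the only slightly delicate point to be the rewriting of $\bigl((x*_e y)\oplus(x*_e z)\bigr)\oplus e$ via $e=e\oplus e$ and (M3); once that is done, everything is a direct application of (M2), (M3) and $(\star)$, with no case analysis or existence arguments needed.
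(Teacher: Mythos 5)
Your proof is correct, and both directions work as written; the only difference with the paper is in the converse implication. For (i) $\Rightarrow$ (ii) you argue exactly as the paper does, reading off (\ref{distribituvity}) from the compatibility identity $(\ref{condition * compatible with oplus})$ plus idempotency. For (ii) $\Rightarrow$ (i) the paper takes a shorter route: it specializes (\ref{distribituvity}) at $y=z=e$ and uses only the unit law and the idempotency of $e$, namely $a\oplus a=(a*_e e)\oplus(a*_e e)=a*_e(e\oplus e)=a*_e e=a$, with no appeal to (M2), (M3) or to the identity $(x*_e y)\oplus e=x\oplus y$. Your argument instead upgrades (\ref{distribituvity}) to the self-distributivity law $x\oplus(y\oplus z)=(x\oplus y)\oplus(x\oplus z)$ by adding $e=e\oplus e$ and using (M3) together with $(\star)$, and then recovers idempotency by cancelling the common factor $y\oplus z$ at $x=y$. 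This is legitimate: $(\star)$ does hold for any internal monoid structure by Corollary \ref{corollary1}, and $e$ is idempotent because it is the unit of an internal monoid. What your longer route buys is a slightly stronger intermediate observation (in a ccm-magma with such a monoid structure, condition (ii) is equivalent to self-distributivity of $\oplus$, which in turn is equivalent to idempotency by (M3) and (M2)); what the paper's route buys is brevity, since a single well-chosen specialization of (ii) already produces $a\oplus a=a$.
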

\begin{proof} If $(A,\oplus)$ is a midpoint algebra then   $(\ref{distribituvity})$ follows from $(\ref{condition * compatible with oplus})$.
Conversely, if we have an internal monoid structure $(A,*_e,e)$ together with $(\ref{distribituvity})$, then every element $a\in A$ is an idempotent  \[a\oplus a=(a*_e e)\oplus(a*_e e)=a*_e(e\oplus e)=a*_e e=a.\] Note that  $e$ is  an idempotent because it is the unit element of an internal monoid.
\end{proof}


\subsection*{Some examples illustrating the properties discussed in the previous results}

This section concludes with a short list of examples and counter-examples showing several particular cases and properties, which a given ccm-magma may or may not have for some specific choice of an idempotent element $e$ in it. First we observe that if the four basic properties in the left column on the table below are considered, then there are only six possible combinations between them, namely the ones expressed in the other columns and denoted by I to VI:
\begin{center}
\begin{tabular}{|c|c|c|c|c|c|c|}
\hline
 & I & II & III & IV & V & VI\\
\hline 
$e$-expansive & yes & yes & no  & no & no & no \\
\hline 
$e$-symmetric & yes & no & yes  & no & no & yes \\
\hline 
internal monoid over $e$ & yes & yes & no  & no & yes & yes \\
\hline 
internal group over $e$ & yes & no & no  & no & no & yes \\
\hline 
\end{tabular} 
\end{center}
The properties I to VI defined in the table above have an obvious interpretation. For example, a ccm-magma has property III if and only if it is $e$-symmetric but not $e$-expansive for a specific choice of $e$, it does not have an internal group structure over $e$ or even an internal monoid; while the ccm-magmas with property V have an internal monoid structure over some idempotent element $e$, they are not $e$-expansive nor $e$-symmetric, and consequently do not posses an internal group structure.
Now, combining the previous properties with the existence of one or more,  none, or even all idempotents, and also with associativity, we observe the following list of ccm-magmas $(A,\oplus)$.
\begin{enumerate}
\item  Ccm-magmas with no idempotent elements (in this case there is no interaction with properties I to VI from above):
\begin{enumerate}
\item Non-associative:
\begin{enumerate}
\item $a\oplus b=\frac{a+b}{2}+1$, $A=\mathbb{R}$
\item $a\oplus b=\frac{3ab}{a+b}$, $A=\mathbb{R}^{+}$
\item $a\oplus b=2(a+b)$, $A=\mathbb{R}^{+}$
\end{enumerate}
\item Associative:
\begin{enumerate}
\item $a\oplus b=a+b+1$, $A=[0,+\infty]$
\item $a\oplus b=\frac{ab}{a+b}$, $A=\mathbb{R}^{+}$ or $A=]0,1]$
\item $a\oplus b=a+b+ab$, $A=\mathbb{R}^{+}$
\item $a\oplus b=a+b$, $A=\mathbb{R}^{+}$
\item $a\oplus b=\frac{a+b}{1+ab}$, $A=]0,1[$
\item $a\oplus b=\log(\exp(a)+\exp(b))$, $A=\mathbb{R}$
\end{enumerate}
\end{enumerate}
\item Ccm-magmas with every element  an idempotent, that is, midpoint algebras (due to Proposition \ref{prop midpoint and expansive} the properties V and VI do not apply):
\begin{center}
\begin{tabular}{|c|c|c|c|c|}
\hline
Property & $a\oplus b$ & $A$ & $e\in A$\\
\hline 
I & $\frac{a+b}{2}$ &  $\mathbb{R}$ & every\\  
\hline 
I & $\sqrt[3]{\frac{a^3+b^3}{2}}$ &  $\mathbb{R}$ & every\\  
\hline 
II & $\frac{a+b}{2}$ &  $[0,+\infty[$ &    $e=0$\\  
\hline 
II & $\frac{2ab}{a+b}$ &  $]0,1]$ &    $e=1$\\  
\hline 
III & $\frac{a+b}{2}$ &  $[0,1]$ & $e=\frac{1}{2}$\\  
\hline 
III & $\frac{2ab}{a+b}$ &  $]1,+\infty[$ &    $e=2$\\  
\hline 
IV & $\frac{a+b}{2}$ &  $\mathbb{R}^{+}$ & N/A\\  
\hline 
IV & $\frac{2ab}{a+b}$ &  $\mathbb{R}^{+}$ & N/A\\  
\hline 
\end{tabular} 
\end{center}
\item Ccm-magmas with at least one idempotent element which are not mid\-point-algebras (in this case again we distinguish between associative and non-associative and give examples of each one of the properties I to VI from above):
\begin{enumerate}
\item non-associative:
\begin{center}
\begin{tabular}{|c|c|c|c|c|}
\hline
Property & $a\oplus b$ & $A$ & $e\in A$\\
\hline 
I & $2(a+b)$ &  $\mathbb{R}$ & $e=0$\\  
\hline 
I & $2\sqrt[3]{a^3+b^3}$ &  $\mathbb{R}$ & $e=0$\\  
\hline 
II & $2(a+b)$ &  $[0,+\infty[$ &    $e=0$\\  
\hline 
III & $\frac{a+b}{3}$ &  $[-1,1]$ &    $e=0$\\  
\hline 
IV & $\frac{a+b}{3}$ &  $[0,1]$ & $e=0$\\  
\hline 
V & $2(a+b)$ &  $\mathbb{N}_{0}$ &    $e=0$\\  
\hline 
VI & $2(a+b)$ &  $\mathbb{Z}$ & $e=0$\\  
\hline 
\end{tabular} 
\end{center}
\item associative (in this case we only distinguish properties I and II from above, as it follows from Proposition \ref{prop ccm-magma associative}).
\begin{center}
\begin{tabular}{|c|c|c|c|c|}
\hline
Property & $a\oplus b$ & $A$ & $e\in A$\\
\hline 
I & $a+b$ &  $\mathbb{R}$ & $e=0$\\  
\hline 
II & $a+b$ &  $[0,+\infty[$ &    $e=0$\\  
\hline 
II & $a+b-ab$ &  $[0,1[$ &    $e=0$\\  
\hline 
\end{tabular} 
\end{center}
\end{enumerate}

\item Finite ccm-magmas. Every finite ccm-magma is homogeneous. Some examples are as follows. The matrix \[A_2\left(\begin{array}{ccc} 1 & 3 & 2\\ 3 & 2 & 1\\ 2 & 1 & 3 \end{array}\right)\] shows an example of the \emph{multiplication table} for a non-associative ccm-magma with three idempotents. The matrix\[A_3
\left(\begin{array}{ccc} 2 & 1 & 3\\ 1 & 3 & 2\\ 3 & 2 & 1 \end{array}\right)\] shows an example of a non-associative ccm-magma with no idempotents. However there are no ccm-magmas with a finite and even number of idempotents. This is due to the fact that if $(A,\oplus)$ is a ccm-magma then the subset \mbox{$\{a\in A\mid a\oplus a=a\}$} is a subalgebra of ccm-magmas and there are no commutative idempotent quasigroups (homogeneous ccm-magmas) of even order \cite{Stein}.
\end{enumerate}

\section{Internal Relations}\label{sec relations}

As we have observed in the introduction, the category of ccm-magmas is not Mal'tsev, and so there is no hope of having every internal reflexive relation automatically as a congruence, nor having any internal relation as a difunctional one. Nevertheless, as it is shown in \cite{ZJ-NMF} a category is weakly Mal'tsev if and only if every strong relation is difunctional. In particular, if $f,g\colon{X\times Y\to B}$ are any two morphisms of ccm-magmas, then the relation $R\subseteq X\times Y$, defined by
\begin{equation}\label{eq_strongrel}
xRy\Leftrightarrow f(x,y)=g(x,y)\end{equation}
 is a strong relation.

Hence the following results:

\begin{proposition}\label{prop_intrel1}  Let $f,g\colon{A\times A\to B}$ be two morphisms in the category of ccm-magmas such that $f(a,a)=g(a,a)$ for every $a\in A$. The internal relation defined by \[xRy\Leftrightarrow f(x,y)=g(x,y)\] is a congruence.
\end{proposition}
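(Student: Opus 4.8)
The plan is to verify directly the four defining properties of a congruence on $A$: that $R$, regarded as a subset of $A\times A$, is a subalgebra, and that as a relation it is reflexive, symmetric and transitive. Two of these are immediate from the hypotheses, and the two substantial ones — symmetry and transitivity — are exactly what Lemma~\ref{lemma1} delivers once we know $f(a,a)=g(a,a)$ for all $a$.

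First I would check that $R$ is a subalgebra of $A\times A$. Suppose $xRy$ and $x'Ry'$, i.e. $f(x,y)=g(x,y)$ and $f(x',y')=g(x',y')$. Since $f$ and $g$ are morphisms of ccm-magmas (so that in particular $f(x\oplus x',y\oplus y')=f(x,y)\oplus f(x',y')$, and similarly for $g$ — this is the content of axiom (M3), cf.\ Proposition~\ref{prop1}), we obtain
\[ f(x\oplus x',\,y\oplus y')=f(x,y)\oplus f(x',y')=g(x,y)\oplus g(x',y')=g(x\oplus x',\,y\oplus y'),\]
so $(x\oplus x')\,R\,(y\oplus y')$, and $R$ is closed under $\oplus$. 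Reflexivity is precisely the standing hypothesis: $f(a,a)=g(a,a)$ says $aRa$ for every $a\in A$.

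For symmetry and transitivity I would simply invoke Lemma~\ref{lemma1}, whose hypothesis ($f(a,a)=g(a,a)$ for all $a$) is exactly what we are given. Part (ii) yields $f(a,b)=g(a,b)\Rightarrow f(b,a)=g(b,a)$, i.e.\ $aRb\Rightarrow bRa$; part (i) yields that $f(a,b)=g(a,b)$ together with $f(b,c)=g(b,c)$ forces $f(a,c)=g(a,c)$, i.e.\ $aRb$ and $bRc$ imply $aRc$. Hence $R$ is a reflexive, symmetric, transitive subalgebra of $A\times A$, which is the definition of a congruence.

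There is essentially no obstacle to surmount here: the real computational work has been packaged into Lemma~\ref{lemma1}, and its proof is the only genuine calculation (two applications of mediality together with cancellation). The one point worth stating carefully is the double role of the hypothesis $f(a,a)=g(a,a)$ — it gives reflexivity outright, and it is the indispensable assumption that makes both clauses of Lemma~\ref{lemma1} hold; dropping it leaves $R$ merely a difunctional strong relation, as noted just before the statement.
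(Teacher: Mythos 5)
Your proof is correct and follows the same route as the paper, which simply observes that the result is an immediate consequence of Lemma~\ref{lemma1}: symmetry and transitivity come from parts (ii) and (i) of that lemma, reflexivity from the hypothesis $f(a,a)=g(a,a)$, and closure under $\oplus$ from $f$ and $g$ being homomorphisms on the product. You merely spell out explicitly the subalgebra and reflexivity checks that the paper leaves implicit.
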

\begin{proof}
It is an immediate consequence of Lemma \ref{lemma1}.
\end{proof}

\begin{proposition}\label{prop_intrel2}  Let $f,g\colon{X\times Y\to B}$ be two morphisms in the category of ccm-magmas. The internal relation defined by \[xRy\Leftrightarrow f(x,y)=g(x,y)\] is difunctional.
\end{proposition}
\begin{proof}
It is an immediate consequence of Lemma \ref{lemma2}.
\end{proof}

We now present some results involving another special type of internal relations, namely the ones constructed from a subalgebra of a given ccm-magma.

\begin{proposition}\label{prop R rel items} Let $(A,\oplus)$ be a ccm-magma with a subalgebra $X\subseteq A$ and an idempotent element $e\in X$. The relation
\begin{equation}\label{relationsubalg}aRb\Leftrightarrow \exists x\in X, a\oplus e=x\oplus b\end{equation}
 \begin{enumerate}
\item[(i)]  is an internal relation;
\item[(ii)]  is reflexive;
\item[(iii)] is transitive whenever $X$ has a (unique) internal monoid structure with $e\in X$ as its unit;
\item[(iv)] is symmetric if and only if $X$ is $e$-symmetric.
\end{enumerate}
\end{proposition}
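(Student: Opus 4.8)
The plan is to verify each of the four claims about the relation $R$ defined on $A$ by $aRb \Leftrightarrow \exists x\in X,\ a\oplus e = x\oplus b$, working directly from the axioms (M1)--(M3) together with the hypothesis that $X$ is a subalgebra containing the idempotent $e$.

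For (i), I would exhibit $R$ as a subalgebra of $A\times A$. Concretely, I would show that if $aRb$ and $a'Rb'$, witnessed by $x,x'\in X$, then $(a\oplus a')R(b\oplus b')$ is witnessed by $x\oplus x'\in X$ (which lies in $X$ since $X$ is a subalgebra). This is a one-line application of (M3): from $a\oplus e = x\oplus b$ and $a'\oplus e = x'\oplus b'$ we get $(a\oplus a')\oplus(e\oplus e) = (x\oplus x')\oplus(b\oplus b')$, and then $e\oplus e = e$ because $e$ is idempotent, giving exactly $(a\oplus a')\oplus e = (x\oplus x')\oplus(b\oplus b')$. So $R$ is closed under the operation and hence an internal relation (a subalgebra of $A\times A$).

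For (ii), reflexivity $aRa$ is witnessed by the choice $x=e\in X$, since $a\oplus e = e\oplus a$ by (M1). For (iii), assuming $X$ carries its (unique) internal monoid structure $(X,*_e,e)$, which by Corollary~\ref{corollary1} means the equation $\theta\oplus e = p\oplus q$ has a solution $\theta = p*_e q$ in $X$ for all $p,q\in X$: suppose $aRb$ and $bRc$, witnessed by $x,y\in X$, so $a\oplus e = x\oplus b$ and $b\oplus e = y\oplus c$. I want to produce $z\in X$ with $a\oplus e = z\oplus c$. The natural candidate is $z = x*_e y$. I would compute $(a\oplus e)\oplus e$, substitute $x\oplus b$ for $a\oplus e$, then use $b\oplus e = y\oplus c$ and the medial/commutative laws to reorganize into $(x\oplus y)\oplus(\text{something})\oplus c$, and finally replace $x\oplus y$ using $x*_e y$ (whose defining property is $(x*_e y)\oplus e = x\oplus y$); cancelling one copy of $e$ via (M2) should yield $a\oplus e = (x*_e y)\oplus c$. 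Getting the bookkeeping of the $e$'s exactly right here is the main obstacle: one has to manipulate a three-fold sum $a\oplus e\oplus(\text{copies of }e)$ so that the monoid multiplication $*_e$ can be invoked, and then cancel. For (iv), symmetry: if $R$ is symmetric then from the reflexivity witness or directly from $eRe$-type instances one extracts, for each $x\in X$, the relation $eRx$ and hence $xRe$, which unwinds to $\exists\,\bar x\in X,\ x\oplus e = \bar x\oplus e$, i.e.\ (after cancellation) $\bar x\oplus x = e$ — but I should instead start from $aRb$ with a well-chosen instance: taking $a=e$, $b=x$, the witness $x\in X$ gives $e\oplus e = x\oplus x$... more carefully, I would test $R$ on the pair $(e,x)$: we have $eRx$ via the witness $x$ itself since $e\oplus e = e = ?$; rather, $e\oplus e = x\oplus x$ need not hold, so the correct instance is: $x R e$ always holds (witness $x$, since $x\oplus e = x\oplus e$), so by symmetry $eRx$, i.e.\ $\exists y\in X,\ e\oplus e = y\oplus x$, i.e.\ $e = y\oplus x$, showing $X$ is $e$-symmetric. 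Conversely, if $X$ is $e$-symmetric with inverses $-_e$, and $aRb$ via $x$, then $bRa$ is witnessed by $-_e(x)$: from $a\oplus e = x\oplus b$ add $e\oplus -_e(x) = e$ to both sides and use (M3) to transfer, extracting $b\oplus e = -_e(x)\oplus a$ after cancelling. I would write this out as a short chain of equalities in an \texttt{eqnarray*}, mirroring the style of Lemma~\ref{lemma1} and Proposition~\ref{prop_homogeneous}, with the cancellation axiom (M2) invoked at the end of each part.
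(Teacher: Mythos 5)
Your proposal is correct and follows essentially the same route as the paper's own proof in all four parts: closure of $R$ under $\oplus$ with witness $x\oplus x'$ and $e\oplus e=e$, reflexivity with witness $e$, transitivity with witness $x*_e y$ via Corollary~\ref{corollary1} followed by a medial rearrangement and cancellation by (M2), and the symmetry equivalence obtained from the instances $xRe$ (giving $e$-symmetry) and the witness $-_e(x)$ for the converse. One small caveat: the identity ``$e\oplus(-_e(x))=e$'' you invoke in part (iv) is not true in general and is not needed --- the computation only uses $-_e(x)\oplus x=e$ together with (M1), (M3) and cancellation, after which your conclusion $b\oplus e=-_e(x)\oplus a$ follows exactly as in the paper.
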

\begin{proof}
(i) The relation is an internal relation if and only if $R\subseteq A\times A$ is a subalgebra of the product, or equivalently, if and only if\[aRb,a'Rb'\Rightarrow (a\oplus a')R(b\oplus b').\] It is the case because if there exist $x,x'\in X$ such that $a\oplus e=x\oplus b$ and $a'\oplus e=x'\oplus b'$ then  we have \begin{eqnarray*}
(a\oplus a')\oplus e &=&  (a\oplus a')\oplus (e\oplus e)\\
&=& (a\oplus e)\oplus (a'\oplus e)\\
&=& (x\oplus b)\oplus (x'\oplus b')\\
&=& (x\oplus x')\oplus (b\oplus b')\\
\end{eqnarray*}
showing  that  there is $(x\oplus x')\in X$ such that\[(a\oplus a')\oplus e=(x\oplus x')\oplus (b\oplus b'),\] and so $(a\oplus a')R(b\oplus b')$.

(ii) The reflexivity of $R$ follows from the observation that \mbox{$a\oplus e=e\oplus a$} and $e\in X$.

(iii) Suppose there is an internal monoid structure in $X$ with $e$ as its unit element. Corollary \ref{corollary1} tells us that $X$ has a monoid structure with $e$ as unit if and only if for every $x,y\in X$ there is $(x*_e y)\in X$ such that $(x*_e y)\oplus e=x\oplus y$. In this case, we prove transitivity by showing that if $aRb$ and $bRc$, that is \[a\oplus e=x\oplus b\quad,\quad b\oplus e=y\oplus c,\] then $aRc$, because \[a\oplus e=(x*_e y)\oplus c.\]
It is straightforward to prove the above equality, by composing with $(e\oplus e)$  and then using cancellation:
\begin{eqnarray*}
(a\oplus e)\oplus (e\oplus e)&=&(x\oplus b)\oplus (e\oplus e)\\
&=&(x\oplus e)\oplus (b\oplus e)\\
&=&(x\oplus e)\oplus (y\oplus c)\\
&=&(x\oplus y)\oplus (e\oplus c)\\
&=&((x*_e y)\oplus e)\oplus (c\oplus e)\\
&=&((x*_e y)\oplus c)\oplus (e\oplus e).
\end{eqnarray*} 

(iv) Let us show that $R$ is symmetric if and only if $X$ is $e$-symmetric.
By definition of $R$, for every $x\in X$, we have $xRe$. Hence, if the relation is symmetric we also have $eRx$ from which we conclude the existence of $y\in X$ such that $e=e\oplus e=y\oplus x$. This shows that $X$ is $e$-symmetric. Conversely, if $X$ is $e$-symmetric then for every $x\in X$ there is $y\in X$ such that $x\oplus y=e$ and consequently the relation $R$ is symmetric. Indeed, given $aRb$, that is $a\oplus e=x\oplus b$ for some $x\in X$,  we have, with $y=-_e(x)$: \[(y\oplus a)\oplus e=(y\oplus e)\oplus(a\oplus e)=(y\oplus x)\oplus(e\oplus b)=e\oplus(e\oplus b)\]from which we conclude \[y\oplus a=b\oplus e\] and so $bRa$.
\end{proof}

The condition on the existence of a monoid structure in item (iii) above is sufficient for the relation to be transitive but it is not necessary. Indeed we only have the operation $(x*_e y)$ well defined for certain  pairs $(x,y)\in X\times X$, namely the ones for which given any $c\in A$ there are $a,b\in A$ such that $a=(y*_e c)$ and $b=x*_e (y*_e c)$.

This suggests the following necessary and sufficient condition for the transitivity of this type of internal relations. 

\begin{proposition} Let $(A,\oplus)$ be a ccm-magma with a subalgebra $X\subseteq A$ and an idempotent element $e\in X$. The relation\[aRb\Leftrightarrow \exists x\in X, a\oplus e=x\oplus b\]
 
is transitive if and only if:\begin{quote}
for all $x,y\in X$ and $c\in A$, if there exist $a,b\in A$ such that\begin{equation}\label{eq used for transitivity}
a\oplus e=x\oplus b\quad\text{and}\quad b\oplus e=y\oplus c
\end{equation}then there is $z\in X$ such that $z\oplus e=x\oplus y$.
\end{quote}
\end{proposition}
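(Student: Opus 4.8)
The plan is to prove both implications separately, making direct use of the computation already carried out in the proof of Proposition~\ref{prop R rel items}(iii).

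\textbf{Necessity.} First I would assume $R$ is transitive and derive the stated condition. So suppose $x,y\in X$ and $c\in A$ are such that there exist $a,b\in A$ with $a\oplus e=x\oplus b$ and $b\oplus e=y\oplus c$. The first equation says $aRb$ and the second says $bRc$, so transitivity gives $aRc$, that is, there is $z\in X$ with $a\oplus e=z\oplus c$. The task is then to show this same $z$ satisfies $z\oplus e=x\oplus y$. The natural move is to add $e\oplus e$ (or $c\oplus e$, etc.) to both sides and manipulate using (M3) and cancellation, exactly as in the displayed chain of equalities in the proof of (iii): from $z\oplus c=a\oplus e=x\oplus b$ and $b\oplus e=y\oplus c$ one computes $(z\oplus e)\oplus(c\oplus e)=(z\oplus c)\oplus(e\oplus e)=(x\oplus b)\oplus(e\oplus e)=(x\oplus e)\oplus(b\oplus e)=(x\oplus e)\oplus(y\oplus c)=(x\oplus y)\oplus(e\oplus c)$, and then (M1) plus (M2) (cancelling $c\oplus e$) yields $z\oplus e=x\oplus y$.

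\textbf{Sufficiency.} Conversely, assume the condition holds and show $R$ is transitive. Suppose $aRb$ and $bRc$, i.e. there are $x,y\in X$ with $a\oplus e=x\oplus b$ and $b\oplus e=y\oplus c$. This is precisely the hypothesis (\ref{eq used for transitivity}) of the condition (with the very same $a,b,c\in A$), so we obtain $z\in X$ with $z\oplus e=x\oplus y$. It then remains to verify $a\oplus e=z\oplus c$, which gives $aRc$. Again this is the same cancellation argument: compute $(a\oplus e)\oplus(e\oplus e)=(x\oplus b)\oplus(e\oplus e)=(x\oplus e)\oplus(b\oplus e)=(x\oplus e)\oplus(y\oplus c)=(x\oplus y)\oplus(e\oplus c)=(z\oplus e)\oplus(c\oplus e)=(z\oplus c)\oplus(e\oplus e)$, and cancel $e\oplus e$ by (M2).

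I expect no real obstacle here: the statement is essentially a repackaging of the proof of Proposition~\ref{prop R rel items}(iii), with the existence of the monoid element $x*_e y$ replaced by the weaker hypothesis that the specific product $x\oplus y$ can be ``halved'' into $X$ (i.e. some $z\in X$ with $z\oplus e=x\oplus y$) whenever it is actually needed, that is, whenever suitable witnesses $a,b$ exist. The one point requiring a little care is bookkeeping: one must make sure that in the sufficiency direction the $a,b,c$ appearing in the hypothesis of the condition are literally the elements witnessing $aRb$ and $bRc$, so that the extracted $z$ is usable; and in the necessity direction that the $z$ extracted from $aRc$ is the one shown to satisfy $z\oplus e=x\oplus y$. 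Both are immediate from the way the quantifiers are arranged. The underlying algebraic identity in each direction is a single application of (M3) followed by (M1) and cancellation, just as in the proofs of Lemmas~\ref{lemma1} and~\ref{lemma2}.
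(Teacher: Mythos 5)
Your proposal is correct and follows essentially the same route as the paper's own proof: in each direction you extract the witness guaranteed by the hypothesis (transitivity in one direction, the halving condition in the other) and verify the required identity by the same chain of (M3)/(M1) manipulations followed by cancellation (M2). No gaps; the bookkeeping points you flag are handled exactly as in the paper.
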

\begin{proof}
Assume $R$ is transitive. If we have solutions $a$ and $b$ for the equations (\ref{eq used for transitivity}) then we also have $aRb$ and $bRc$ which, by transitivity, gives us the desired $z\in X$ such that $a\oplus e=z\oplus c$. It is now a simple calculation to check that $z\oplus e=x\oplus y$. Indeed we have
\begin{eqnarray*}
(z\oplus e)\oplus (c\oplus e)&=&(z\oplus c)\oplus (e\oplus e)\\
&=&(a\oplus e)\oplus (e\oplus e)\\
&=&(x\oplus b)\oplus (e\oplus e)\\
&=&(x\oplus e)\oplus (b\oplus e)\\
&=&(x\oplus e)\oplus (y\oplus c)\\
&=&(x\oplus y)\oplus (e\oplus c)
\end{eqnarray*} 
and the result follows from cancellation.
Conversely, if $aRb$ and $bRc$ then we also have $x,y\in X$ as in  $(\ref{eq used for transitivity})$ and hence  there is an element $z\in X$ such that $z\oplus e=x\oplus y$ from which we conclude that  $a\oplus e=z\oplus c$. Indeed
\begin{eqnarray*}
(a\oplus e)\oplus (e\oplus e)&=&(x\oplus b)\oplus (e\oplus e)\\
&=&(x\oplus e)\oplus (b\oplus e)\\
&=&(x\oplus e)\oplus (y\oplus c)\\
&=&(x\oplus y)\oplus (e\oplus c)\\
&=&(z\oplus e)\oplus (c\oplus e)\\
&=&(z\oplus c)\oplus (e\oplus e),
\end{eqnarray*} 
this shows that $aRc$ and concludes the proof.
\end{proof}

When $A$ is $e$-expansive, item (iii) in Proposition \ref{prop R rel items}  can now be reformulated so to relate the transitivity of $R$ with the property of $X$ being $e$-expansive.

\begin{corollary}\label{corollary2} Let $(A,\oplus)$ be a ccm-magma which is $e$-expansive for some idempotent element $e\in A$ and let $X$ be a subalgebra with $e\in X$. The relation \[aRb\Leftrightarrow \exists x\in X, a\oplus e=x\oplus b\] is transitive if and only if $X$ is $e$-expansive.
\end{corollary}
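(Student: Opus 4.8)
The plan is to derive this corollary from the preceding proposition, which characterizes transitivity of $R$ by the condition: for all $x,y\in X$ and $c\in A$, if there exist $a,b\in A$ solving $a\oplus e=x\oplus b$ and $b\oplus e=y\oplus c$, then there is $z\in X$ with $z\oplus e=x\oplus y$. Since $A$ is assumed $e$-expansive, the element $z=2_e(x\oplus y)$ always exists in $A$ satisfying $z\oplus e=x\oplus y$; the whole content is therefore whether this $z$ lies in $X$. So the corollary should read: transitivity of $R$ holds if and only if $2_e(x\oplus y)\in X$ for all $x,y\in X$ whenever the relevant witnesses $a,b$ exist — and I must show this reduces exactly to $X$ being $e$-expansive.

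For the ``if'' direction, suppose $X$ is $e$-expansive. Then for any $x,y\in X$ we have $2_e(x\oplus y)\in X$ directly (note $x\oplus y\in X$ since $X$ is a subalgebra, and $2_e$ sends $X$ into $X$ by $e$-expansiveness of $X$). In particular the condition of the previous proposition is satisfied (we do not even need the hypothesis that $a,b$ exist), so $R$ is transitive. Alternatively one can invoke Proposition \ref{prop R rel items}(iii) directly: $e$-expansiveness of $X$ gives, via Proposition \ref{prop expansive implies has a monoid}, an internal monoid structure $(X,*_e,e)$, hence transitivity.

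For the ``only if'' direction, assume $R$ is transitive; I want to conclude $X$ is $e$-expansive, i.e. for every $x\in X$ there is $w\in X$ with $w\oplus e=x$. The key is to produce, for a given $x\in X$, the required witnesses $a,b\in A$ so that the previous proposition applies. The natural choice is to feed the transitivity criterion with a cleverly chosen pair. Take $y=e$ and $c=x$: then $b\oplus e=e\oplus c=x$ is solvable in $A$ by $b=2_e(x)$ (using $e$-expansiveness of $A$), and then $a\oplus e=x\oplus b$ is solvable in $A$ by $a=2_e(x\oplus b)$. Hence the hypothesis of the previous proposition is met, so there is $z\in X$ with $z\oplus e=x\oplus e$, and by cancellation (M2) $z=x$ — but this only shows $x\in X$, which is useless. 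So a different substitution is needed: the point is that transitivity must be exploited to move an element \emph{out of} a product. I would instead take $y\in X$ arbitrary, $c=e$, so that $b\oplus e=y\oplus e$ forces $b=y$ (after checking $y$ works: $y\oplus e = y\oplus c$, fine), and $a\oplus e=x\oplus y$ solvable in $A$ by $a=2_e(x\oplus y)$; then transitivity yields $z\in X$ with $z\oplus e=x\oplus y$, i.e. $2_e(x\oplus y)\in X$ for all $x,y\in X$. Finally, to get genuine $e$-expansiveness of $X$ I apply this with $y=e$: $2_e(x\oplus e)=2_e(x)$, so $2_e(x)\in X$ for every $x\in X$, which is exactly the statement that $X$ is $e$-expansive. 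The main obstacle, and the step to be most careful about, is choosing the substitution into the previous proposition so that the existence-of-witnesses hypothesis is genuinely satisfiable in $A$ (this is where $e$-expansiveness of the ambient $A$ is used) while the conclusion still yields new information about $X$; the $y=e$, $c=e$ specialization is what makes everything collapse correctly.
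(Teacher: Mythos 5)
Your ``if'' direction is fine and is the paper's own argument: $e$-expansiveness of $X$ gives the internal monoid $(X,*_e,e)$ by Proposition \ref{prop expansive implies has a monoid}, and Proposition \ref{prop R rel items}(iii) then gives transitivity. The gap is in the last step of the ``only if'' direction. The identity $2_e(x\oplus e)=2_e(x)$ is false in general: by definition $2_e(x\oplus e)$ is the unique solution of $w\oplus e=x\oplus e$, so axiom (M2) gives $2_e(x\oplus e)=x$, not $2_e(x)$; the two coincide only when $x\oplus e=x$, i.e.\ essentially only in the associative case (Proposition \ref{prop ccm-magma associative}). Hence your $y=e$ specialization returns only the tautology $x\in X$ --- exactly the collapse you noticed in your first attempted substitution --- and the argument stops short of $e$-expansiveness of $X$.

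What you do establish (correctly, and with the same content as the paper, which applies transitivity directly to $2_e(x\oplus y)\,R\,y$ and $y\,R\,e$ rather than going through the criterion of the preceding proposition) is that $2_e(x\oplus y)\in X$ for all $x,y\in X$, i.e., by Corollary \ref{corollary1}, that $X$ carries an internal monoid structure over $e$. As the paper itself stresses (after Corollary \ref{corollary1}, in Proposition \ref{prop midpoint and expansive}, and in the Conclusion), this condition is in general strictly weaker than being $e$-expansive; the two agree when every element of $X$ decomposes as $x_1\oplus x_2$, e.g.\ when $X$ is a midpoint algebra. The paper's proof passes from this same intermediate statement to ``$X$ is $e$-expansive'' without further argument, so you have reached precisely the delicate point; but your proposed bridge rests on a false identity, and no formal manipulation of the intermediate statement alone can supply it: for $A=\mathbb{R}$ with $a\oplus b=2(a+b)$, $e=0$ and $X=\mathbb{Z}$ (structures appearing in the paper's own list of examples), one has $2_0(x\oplus y)=x+y\in X$ for all $x,y\in X$ and the relation $aRb\Leftrightarrow a-b\in\mathbb{Z}$ is transitive, yet $X$ is not $0$-expansive. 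So completing the converse requires an extra hypothesis (such as idempotency of $X$, as in Proposition \ref{prop midpoint and expansive}) or a genuinely different use of transitivity, not the $y=e$ substitution.
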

\begin{proof}
If $X$ is $e$-expansive then in particular it has an internal monoid structure with $x*y=2_e(x\oplus y)_X$. Hence the relation is transitive (Proposition \ref{prop R rel items}(iii)). Conversely, if the relation is transitive, then for all $x,y\in X$ the element $2_e(x\oplus y)\in A$ is in fact an element in $X$. Indeed, $2_e(x\oplus y)Ry$ and $yRe$ implies $2_e(x\oplus y)Re$ which is equivalent to $2_e(x\oplus y)\in X$. This shows that $X$ is $e$-expansive.
\end{proof}

Combining the two previous results on symmetry and transitivity with $X$ being $e$-expansive and $e$-symmetric we also observe:

\begin{corollary}\label{corollary3} Let $(A,\oplus)$ be a homogeneous ccm-magma with a subalgebra $X\subseteq A$ and an idempotent element $e\in X$. The relation \[aRb\Leftrightarrow \exists x\in X, a\oplus e=x\oplus b\] is a congruence, if and only if $X$ is homogeneous.
\end{corollary}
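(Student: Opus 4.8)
The plan is to derive this as a more or less immediate consequence of the three earlier facts: Proposition \ref{prop R rel items} (which lists when $R$ is reflexive, transitive, and symmetric), Corollary \ref{corollary2} (transitivity of $R$ $\Leftrightarrow$ $X$ is $e$-expansive, under the hypothesis that $A$ is $e$-expansive), and Proposition \ref{prop_homogeneous} (an $e$-expansive and $e$-symmetric ccm-magma is homogeneous). Note first that since $A$ is homogeneous it is in particular $e$-expansive, so both Corollary \ref{corollary2} and item (iii)--(iv) of Proposition \ref{prop R rel items} apply to our $X$.

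For the forward direction, suppose $R$ is a congruence. Then $R$ is transitive, so by Corollary \ref{corollary2} the subalgebra $X$ is $e$-expansive. Also $R$ is symmetric, so by Proposition \ref{prop R rel items}(iv) the subalgebra $X$ is $e$-symmetric. Being $e$-expansive and $e$-symmetric, $X$ is homogeneous by Proposition \ref{prop_homogeneous}. For the converse, suppose $X$ is homogeneous. Then $X$ is in particular $e$-expansive, so $R$ is transitive by Corollary \ref{corollary2}; and $X$ is in particular $e$-symmetric, so $R$ is symmetric by Proposition \ref{prop R rel items}(iv); and $R$ is always reflexive by Proposition \ref{prop R rel items}(ii), and always an internal relation by Proposition \ref{prop R rel items}(i). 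Hence $R$ is a reflexive, symmetric, transitive subalgebra of $A\times A$, i.e.\ a congruence.

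I do not expect any genuine obstacle here: every ingredient is already in place, and the argument is just a matter of quoting the right items in the right order. The one small point worth stating explicitly (rather than leaving implicit) is that "homogeneous" for a ccm-magma means $e$-expansive for every $e$, so it trivially implies $e$-expansive and $e$-symmetric for the particular idempotent $e\in X$ at hand; and conversely Proposition \ref{prop_homogeneous} upgrades the pair ($e$-expansive, $e$-symmetric) back to full homogeneity. Once that equivalence is noted, the proof is a two-line bookkeeping exercise combining the cited results.
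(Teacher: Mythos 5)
Your proposal is correct and matches the paper's own argument: both directions are obtained by combining Proposition \ref{prop R rel items} (items (i), (ii), (iv)), Corollary \ref{corollary2} (using that $A$, being homogeneous, is $e$-expansive), and Proposition \ref{prop_homogeneous}, exactly as the paper does. No gaps; the bookkeeping you describe is the whole proof.
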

\begin{proof}

If $X$ is homogeneous then it is $e$-expansive with $2_e(x)\in X$ and it is $e$-symmetric with $-_e(x)=2_x(e)\in X$, for every $x\in X$. As a consequence the relation $R$ is transitive and symmetric, and hence it is a congruence, since it is always an internal reflexive relation.
Conversely, let us suppose $R$ is a congruence. By Corollary \ref{corollary2} and Proposition \ref{prop R rel items}(iv) we already know that $X$ is $e$-expansive and $e$-symmetric, hence the result in Proposition \ref{prop_homogeneous} concludes the proof.
%
%
\end{proof}

\section{Conclusion}

This work shows that the category of ccm-magmas admits several classifications for its objects. One possibility is to differentiate between those ccm-magmas admitting an internal monoid structure and those who don't. A ccm-magma $(A,\oplus)$ with a given idempotent, $e$, admits an internal monoid structure with $e$ as its unit if and only if the equation \[x\oplus e=a\oplus b\] has a solution $x$ for every  $a,b$ in $A$. This condition is weaker than $A$ being $e$-expansive. However, the two conditions are equivalent when every element $a\in A$ can be decomposed as ${a=x_1\oplus x_2}$. This property is considered, for instance, in \cite{Jezek93}.


It is shown that every relation $R$ of the form (\ref{eq_strongrel}), constructed with the two homomorphisms $f$ and $g$, is necessarily a difunctional relation. This result is also a consequence of the fact that the relation $R$ is a strong relation and  the category of ccm-magmas is weakly Mal'tsev. More generally, when $f$ and/or $g$ are not homomorphisms, it might happen that $R$ is still  an  internal relation but not  a difunctional one. In a similar way,  if $f$ and $g$ are as in Proposition \ref{prop_intrel1}, with $f(a,a)=g(a,a)$, but not homomorphisms,  then we may have a  reflexive internal relation which is not a congruence. For example, the relation $R$ in Proposition \ref{prop R rel items}, is equivalently defined as $aRb$ if and only if $f(a,b)=g(a,b)$, where, for all $a,b\in A$, $f(a,b)=a\oplus e$ while $g(a,b)=a\oplus e$ if there exists $x\in X$ such that $a\oplus e=x\oplus b$, otherwise $g(a,b)=e\oplus b$.


Every finite ccm-magma is necessarily homogeneous (Definition \ref{def_homogeneous}), and since axiom (M3) is weaker than associativity, these kinds of structures may be useful to the random  generation of finite abelian groups. The procedure is very simple: randomly generate a ccm-magma $M$ with at least one idempotent, say $e$ (although this is only important if we are interested in internal structures), and then define
\begin{center}
\texttt{A(i,j)=find(M(:,e)==M(i,j))}
\end{center} 
for every $i$ and $j$, in order to obtain a matrix $A$ with the multiplication table for an abelian group with $e$ as unit element.

The notion of ccm-magma may also be defined internally in every category with binary products (as it is done in \cite{Escardo} for midpoint algebras) and so,  some interesting interactions at this level are also expected, especially for the case of topological ccm-magmas.
%


\begin{thebibliography}{9}

\bibitem{Aczel} J. Acz\'el, \emph{On mean values}, Bull. Americ. Math. Soc. \textbf{54} (1948) 392--400. 

\bibitem{Aczel-book} J. Acz\'el, \emph{Lectures on Functional Equations and their Applications}, Academic Press, 1966.


\bibitem{BB} F.~Borceux and D. Bourn, \emph{Mal'cev, protomodular, homological and semi-abelian categories}, Math. Appl., Vol. 566, Kluwer Acad. Publ., 2004.

\bibitem{Sankappanavar}
S. Burris and H.P. Sankappanavar, \emph{A Course in Universal Algebra}, Springer-Verlag, 1981.

\bibitem{CKP} A. Carboni, G.M. Kelly and M.C. Pedicchio, \emph{Some remarks on Maltsev and Goursat categories}, Appl. Categ. Structures \textbf{1} (1993) 385--421.

\bibitem{Carboni-Lambek-Pedicchio}
A.~Carboni, J.~Lambek, and M.C. Pedicchio, \emph{Diagram chasing in
{M}al'cev
  categories}, J.~Pure Appl. Algebra \textbf{69} (1991) 271--284.

\bibitem{Carboni-Pedicchio-Pirovano}
A.~Carboni, M.C. Pedicchio and N.~Pirovano, \emph{Internal graphs
and internal groupoids in Mal'cev categories}, Cat. Theory 1991, CMS
Conf. Proceedings, \textbf{13} (1992) 97--110.


\bibitem{Escardo} M.H. Escard\'o, A.K. Simpson \emph{A universal characterization of the closed euclidean interval} Logic in Computer Science, Proceedings 16th Annual IEEE Symposium on Logic in Computer Science (2001) 115--125.

\bibitem{Edward} E. Inyangala, \emph{Categorical semidirect products in varieties of groups with multiple operators}, PhD Thesis, UCT, 2010.

\bibitem{ZJ-NMF}
Z. Janelidze and N.~Martins-Ferreira, \emph{Weakly {M}al'cev categories and strong relations}, Theory  Appl. Categ. \textbf{27} (2012), no. 5, 65--79.

\bibitem{Jezek} J. Jezek and T. Kepka, \emph{Medial groupoids}, Rozpravy CSAV, Rada mat. a prir. ved 93-2, Academia Praha (1983).

\bibitem{Jezek93} J. Jezek and T. Kepka, \emph{A note on medial division groupoids}, Proceedings of the American Mathematical Society \textbf{119} (1993), no. 2, 423--426.

\bibitem{Kannappan} P. Kannappan, \emph{Functional Equations and Inequalities with Applications}, 
Sprin\-ger Monographs in Mathematics, New-York, 2009.

\bibitem{Kermit68} S. Kermit, \emph{Medial topological groupoids}, Aequationes Math \textbf{1} (1968) 217--234.

\bibitem{Kermit70}
 S. Kermit, \emph{Cancellative medial means are arithmetic}, Duke Math J. \textbf{37} (1970) 439--445. 

\bibitem{Leibak} A. Leibak and P. Puusemp, \emph{On determinability of idempotent medial commutative quasigroups by their endomorphisms semigroups}, Proceedings of the Estonian Academy of Sciences  \textbf{60} (2011), no. 2, 81--87.

\bibitem{MacLane} S. Mac Lane, \emph{Categories for the Working Mathematician}, Springer-Verlag, 1998. 

\bibitem{NMF2008}
N.~Martins-Ferreira, \emph{Weakly {M}al'cev categories}, Theory Appl. Categ.  \textbf{21} (2008), no.~6, 97--117.

\bibitem{NMF2012}
N.~Martins-Ferreira, \emph{On distributive lattices and Weakly {M}al'tsev categories}, J. Pure Appl. Algebra,  \textbf{216} (2012) 1961--1963.






\bibitem{Polonijo} M. Polonijo, \emph{On medial-like identities}, Quasigroups and Related Systems \textbf{13} (2005) 281--288.

\bibitem{Stein}
S.K. Stein, \emph{On the foundation of quasigroups}, Trans. Amer. Math. Soc. \textbf{85} (1957) 228-256.



\bibitem{Toyoda} K. Toyoda, \emph{On axioms of linear functions}, Proc. Imp. Acad. Tokyo \textbf{17} (1941) 221--227.



%
%
%
%

\end{thebibliography}
\end{document}